\newtheorem{theorem}{Theorem}[section]
\newtheorem{lemma}[theorem]{Lemma}
\newtheorem{proposition}[theorem]{Proposition}
\newtheorem{corollary}[theorem]{Corollary}
\newtheorem{remark}[theorem]{Remark}
\theoremstyle{definition}
\newtheorem{thmy}{Theorem}
\newenvironment{oldtheorem}{\stepcounter{thm}\begin{thmy}}{\end{thmy}}
\newtheorem*{note*}{Note}
\newcommand{\R}{\mathbb R}
\newcommand{\Rn}{{\mathbb R}^n}
\newcommand{\Sn}{\mathbb{ S}^{n-1}}
\newcommand{\Ha}{{{\cal H}^{n-1}}}
\newcommand\blfootnote[1]{%
  \begingroup
  \renewcommand\thefootnote{}\footnote{#1}%
  \addtocounter{footnote}{-1}%
  \endgroup
}
\begin{document}

\title{\bf A note on the $L_p$-Brunn-Minkowski inequality for intrinsic volumes and the $L_p$-Christoffel-Minkowski problem}
\date{\today}
\medskip
\author{Konstantinos Patsalos, Christos Saroglou}
\maketitle
\blfootnote{2020 Mathematics Subject Classification. Primary: 52A20; Secondary: 52A38, 52A39.}
\blfootnote{Keywords. Brunn-Minkowski inequality, geometric partial differential equations, intrinsic volumes.}
\begin{abstract}
 The first goal of this paper is to improve some of the results in \cite{BCPR}. Namely, we establish the $L_p$-Brunn-Minkwoski inequality for intrinsic volumes for origin-symmetric convex bodies that are close to the ball in the $C^2$ sense for a certain range of $p<1$ (including negative values) and we prove that this inequality does not hold true in the entire class of origin-symmetric convex bodies for any $p<1$. The second goal is to establish a uniqueness result for the (closely related) $L_p$-Christoffel-Minkowski problem. More specifically, we show uniqueness in the symmetric case when $p\in[0,1)$ and the data function $g$ in the right hand side is sufficiently close to the constant 1. One of the main ingredients of the proof is the existence of upper and lower bounds for the (convex) solution, that depend only $\|\log g\|_{L^\infty}$, a fact that might be of independent interest.   
\end{abstract}

%\noindent DATA AVAILABILITY STATEMENT required by CVPD: The paper uses no data.
\section{Introduction}
Let $n\geq 2$ be an arbitrary (but fixed) integer and let $\langle\cdot,\cdot\rangle$ be the standard inner product in $\R^n$. For $x\in\mathbb{R}^n$, denote by $|x|:=\sqrt{\langle x,x\rangle}$ the Euclidean norm of $x$ and set $B_2^n:\{x:|x|\leq 1\}$ and $\Sn:=\partial B_2^n$ for the standard unit ball and the unit sphere respectively. The notation $A|B$ stands for the orthogonal projection of a set or a vector $A$ onto the subspace $B$. We denote by $\Ha$ the $n-1$-dimensional Hausdorff measure. For a convex body $K$ in $\R^n$ (that is, a compact convex set with non-empty interior), its {\it support function} $h_K:\R^n\to\R$ is given by
$$h_K(x)=\max_{y\in K}\langle x,y\rangle,\qquad x\in\R^n.$$
It is well known that $h_K$ is a convex and positively homogeneous function. In addition, any convex and positively homogeneous function is the support function of a unique convex body. Furthermore, $h_K\leq h_L$ if and only if $K\subseteq L$ and $K$ contains the origin (resp. in its interior) if and only if $h_K\geq 0$ (resp. $h_K|_{\R^n\setminus\{o\}}>0$). Here, $o$ denotes the origin.

The {\it Wulff shape} $W(f)$ of a continuous function $f:\Sn\to (0,\infty)$ is defined to be the largest (with respect to inclusion) convex body whose support function is dominated by $f$. It is clear that $h_{W(f)}|_{\Sn}=f$, if $f$ is the restriction of a support function on $\Sn$.  Let $K,L$ be convex bodies that contain the origin. For $p\in \R$, $a,b>0$, $\lambda\in(0,1)$, define the $L_p$-Minkowski convex combination (or simply $L_p$-Minkowski sum) $(1-\lambda)\cdot K+_p\lambda \cdot L$ of $K$ and $L$ with respect to $\lambda$ by $$(1-\lambda)\cdot K+_p\lambda \cdot L:=\begin{cases}W(((1-\lambda)h_K^p+\lambda h_L^p)^{1/p}),& p\neq 0\\
W(h_K^{1-\lambda}h_L^\lambda),& p=0\end{cases}\ .$$
{One may allow $o$ to lie in the boundary of $K$ or $L$, by using the following convention: For $a,b\geq0$ and $p<0$, we set $[(1-\lambda)a^p+\lambda\beta^p]^{1/p}=0$ if $a=0$ or $b=0$.}
Note that the case $p=1$ corresponds to the classical Minkowski sum. That is, $(1-\lambda)\cdot K+_1\lambda\cdot L=(1-\lambda)K+\lambda L=\{(1-\lambda)x+\lambda y:x\in K,y\in L\}$.

    The starting point of this paper is the celebrated Brunn-Minkowski inequality (see \cite{Sch14} or \cite{Ga} or \cite{Gar02}) and its $L_p$ analogue, the so called $L_p$-Brunn-Minkowski inequality due to Firey \cite{Fir62} (see also \cite{LYZ12}), stating that for every $p\geq1$, for 
$\lambda\in(0,1)$ and for any convex bodies $K$, $L$ that contain the origin, it holds 
\begin{equation}\label{L_p-BM}V_n((1-\lambda)\cdot K+_p\lambda\cdot L)\geq \left((1-\lambda)V_n(K)^{p/n}+\lambda V_n(L)^{p/n}\right)^{n/p},\end{equation}
where $V_n(\cdot)$ denotes the volume functional in $\R^n$. 
The $L_p$-Brunn-Minkowski inequality \eqref{L_p-BM} for $p>1$ follows from the classical one (case $p=1$), a homogeneity argument and the following inclusion:
\begin{equation}\label{eq-inclusion}
(1-\lambda)\cdot K+_p\lambda\cdot L\subseteq (1-\lambda)\cdot K+_q\lambda\cdot L,\qquad -\infty<p<q\leq\infty,\qquad \end{equation}
with equality if and only if $K=L$. For $0\leq p<1$, inequality \eqref{L_p-BM} is not true in general, but it has been conjectured \cite{BLYZ12} to hold with the additional assumption of $K$ and $L$ being centrally symmetric. 
The case $p=0$ in \eqref{L_p-BM} should be understood as 
\begin{equation}\label{eq-log-BM} V_n((1-\lambda)\cdot K+_0\lambda\cdot L)\geq V_n(K)^{1-\lambda}V_n(L)^{\lambda}.\end{equation}The conjectured inequality \eqref{eq-log-BM} for symmetric convex bodies $K$ and $L$ is known as the {\it log-Brunn-Minkowski} conjecture. Due to \eqref{eq-inclusion}, \eqref{eq-log-BM} is stronger than \eqref{L_p-BM}, for $0<p<1$. 
It follows from the work of Kolesnikov, Milman \cite{KoM22} and Chen, Huang, Li, Liu \cite{CHLL20} that \eqref{L_p-BM} holds in full generality if $p\in [1-\frac{c}{n^{3/2}},1)$, where $c$ a universal constant. Other partial results include the validity of \eqref{eq-log-BM} in the plane \cite{BLYZ12} and for unconditional convex bodies \cite{Sar15} (see also \cite{Sar16}).  The infinitesimal version of \eqref{L_p-BM} (the so-called $L_p$-Minkowski inequality) has been confirmed in an isomorphic sense by Milman \cite{Milb} and Ivaki, Milman \cite{IvMi2} and in the class of zonoids independently by van-Handel \cite{vHa} and Xi \cite{Xi24}. The non-symmetric version of \eqref{L_p-BM} has been studied in \cite{XiL16} and \cite{BoK22}. Closely related results are established in \cite{CEFM04}, \cite{CER}, \cite{GaZ10}, \cite{LMNZ17},\cite{EsM21} and others. For $p<0$, \eqref{L_p-BM} is easily seen to be false even under symmetry assumptions. We refer to \cite{Bor23} for more information concerning the current status of the $L_p$-Brunn-Minkowski conjecture.

Recall the definition of {\it intrinsic volumes} of a compact convex set $K$. The $n$-th intrinsic volume of $K$ is defined to be $V_n(K)$, while for $i\in\{0,1,\dots,n-1\}$, the $i$-th intrinsic volume $V_i(K)$ of $K$ is defined by its {\it Steiner polynomial} 
$$V_n(K+\rho B^n_2)=\sum_{j=0}^n\rho^{n-j}\kappa_{n-j}V_j(K),\qquad \rho>0.$$
Here, $\kappa_d$ stands for the volume of the $d$-dimensional euclidean unit ball. We mention that the functional $V_i(\cdot)$ is positive on convex bodies and homogeneous of degree $i$. See e.g Schneider \cite[chapter 4]{Sch14} for further details. 

As with the case of volume, the classical Brunn-Minkowski inequality for intrinsic volumes (see e.g. \cite{Sch14}) together with \eqref{eq-inclusion} and the homogeneity of intrinsic volumes, yields the $L_p$-Brunn-Minkowski inequality for intrinsic volumes, for $p\geq 1$: For convex bodies $K$ and $L$ that contain the origin, it holds
\begin{equation}
    \label{L_p-BM_intrinsic}
       V_j((1-\lambda)\cdot K+_p\lambda\cdot L)\geq\left((1-\lambda)V_j(K)^{p/j}+\lambda V_j(L)^{p/j}\right)^{j/p}.
    \end{equation}
It is natural to ask if \eqref{L_p-BM_intrinsic} holds for $0\leq p<1$ for symmetric convex bodies, where as in the case $j=n$, the inequality for $p=0$ becomes
\begin{equation}\label{eq-log-BM-cms}
V_j((1-\lambda)\cdot K+_0\lambda\cdot L)\geq V_j(K)^{1-\lambda}V_j(L)^\lambda.\end{equation}
Again, due to \eqref{eq-inclusion}, the validity of \eqref{L_p-BM_intrinsic} for some $p_0\in\R$ implies the validity of \eqref{L_p-BM_intrinsic} for all $p\geq p_0$. 

The question asking for which symmetric convex bodies $K,L$ and for which $p\in[0,1)$, \eqref{L_p-BM_intrinsic} holds true was studied in \cite{BCPR}, where the authors proved the following results.
\begin{oldtheorem}\label{thmold-1}\cite{BCPR}    
Let $n\geq 3$, $p\in[0,1)$ and $j\in\{2,\dots,n-1\}$. There exists $\zeta>0$, that depends only on $n,p$, such that if $K=B_2^n$ and $h_L\in C^2
$ is even with $\|h_L-1\|_{C^2}<\zeta$ (i.e. $L$ lies in a $C^2$-neighbourhood of $B_2^n$), then \eqref{L_p-BM_intrinsic} holds.\end{oldtheorem}
\begin{oldtheorem}\label{thmold-2}\cite{BCPR}
Let $n\geq 3$, $j\in\{2,\dots,n-1\}$. There exists $0<p_j<1$, such that \eqref{L_p-BM_intrinsic} does not hold for all choises of symmetric convex bodies $K$ and $L$, for any $p\in[0,p_j)$. \end{oldtheorem}
The restriction $n\geq 3$ and $j\geq 2$ is due to the following observation (also from \cite{BCPR}): Since $V_1(K)$ is a multiple of $\int_{\Sn}h_Kd\Ha$, it follows by Minkowski's triangle inequality (H\"older's inequality if $p=0$) that the exact reverse inequality of \eqref{L_p-BM_intrinsic} holds (actually without any symmetry assumptions) in the case $j=1$, $p\in[0,1)$. That is,
\begin{equation}\label{eq-reverse-for-j=1}
V_1((1-\lambda)\cdot K+_p\lambda\cdot L)\leq \big((1-\lambda)V_1(K)^p+\lambda V_1(L)^p\big)^{1/p},    \qquad p\in(0,1),
\end{equation}
where as always, for $p=0$, the right hand side is replaced by $V_1(K)^{1-\lambda}V_1(L)^\lambda $. 

In view of Theorem \ref{thmold-2}, one might hope that (as in the case $j=n$) \eqref{L_p-BM_intrinsic} could be true if $p$ is sufficiently close to 1. We prove that, unfortunately, this is not the case. Our first main result is the following.
\begin{theorem}\label{thm-main-counterexamples}
   Let $n\geq 3$, $j\in\{2,\dots,n-1\}$, $p\in[0,1)$ and $\lambda\in(0,1)$. Then, there exist symmetric convex bodies $K$ and $L$ in $\R^n$, such that
   \begin{equation}\label{eq-thm-main-counterexamples}
    V_j((1-\lambda)\cdot K+_p\lambda \cdot L)<V_j(K)^{1-\lambda} V_j(L)^{\lambda }\leq \left((1-\lambda) V_j(K)^{p/j}+\lambda V_j(L)^{p/j}\right)^{j/p}.
    \end{equation}
\end{theorem}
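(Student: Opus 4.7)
The plan is, for each given $p\in[0,1)$, to exhibit origin-symmetric convex bodies $K,L\subset\R^n$ for which \eqref{eq-thm-main-counterexamples} fails, via a limiting/degeneration argument. Consider the family
\[K_\epsilon=B_2^{n-1}\times[-\epsilon,\epsilon],\qquad L_\epsilon=[-\epsilon,\epsilon]\times B_2^{n-1},\]
where $K_\epsilon$ is viewed in the decomposition $\R^n=\mathrm{span}(e_1,\ldots,e_{n-1})\oplus\mathrm{span}(e_n)$ and $L_\epsilon$ in $\R^n=\mathrm{span}(e_1)\oplus\mathrm{span}(e_2,\ldots,e_n)$. Their support functions $h_{K_\epsilon}(\theta)=\sqrt{\theta_1^2+\cdots+\theta_{n-1}^2}+\epsilon|\theta_n|$ and $h_{L_\epsilon}(\theta)=\epsilon|\theta_1|+\sqrt{\theta_2^2+\cdots+\theta_n^2}$ converge uniformly on $\Sn$ as $\epsilon\to 0$ to the support functions $h_1,h_2$ of the unit $(n-1)$-balls sitting in the two hyperplanes $H_1=\mathrm{span}(e_1,\ldots,e_{n-1})$ and $H_2=\mathrm{span}(e_2,\ldots,e_n)$. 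By continuity of the Wulff shape in its defining function and of $V_j$ under Hausdorff convergence, the $L_p$-combinations $M_p^{(\epsilon)}:=(1-\lambda)\cdot K_\epsilon+_p\lambda\cdot L_\epsilon$ converge to the Wulff shape $W_p$ of $f_p:=((1-\lambda)h_1^p+\lambda h_2^p)^{1/p}$, with $V_j(K_\epsilon),V_j(L_\epsilon)\to V_j(B_2^{n-1})$ and $V_j(M_p^{(\epsilon)})\to V_j(W_p)$. The theorem (for this family) reduces to showing $V_j(W_p)<V_j(B_2^{n-1})$.

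For $p=0$ this is immediate: $f_0=h_1^{1-\lambda}h_2^\lambda$ vanishes at $\pm e_1$ and $\pm e_n$, forcing $W_0\subseteq H_1\cap H_2=\mathrm{span}(e_2,\ldots,e_{n-1})$; since $|(\theta_2,\ldots,\theta_{n-1})|\leq\min(h_1(\theta),h_2(\theta))\leq f_0(\theta)$ (geometric mean dominates the minimum), a Cauchy--Schwarz argument identifies $W_0$ with the unit $(n-2)$-ball in that subspace. The strict inequality $V_j(B_2^{n-2})<V_j(B_2^{n-1})$ for $j\in\{2,\ldots,n-1\}$ then follows from the explicit formula $V_j(B_2^m)=\binom{m}{j}\kappa_m/\kappa_{m-j}$: trivially for $j=n-1$ (the left side vanishes), and for $j\leq n-2$ by a short gamma-function ratio check. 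Furthermore, by the inclusion \eqref{eq-inclusion} we have $M_p^{(\epsilon)}\subseteq M_q^{(\epsilon)}$ for $p\leq q$, so $V_j(W_p)$ is monotone non-decreasing in $p$; combined with Hausdorff continuity, the counterexample persists for all $p$ in a right-neighborhood $[0,p^\ast)$ of $0$.

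For larger $p$ this direct construction eventually breaks down, since classical Brunn--Minkowski applied to the non-homothetic limit balls $B_1\subset H_1$ and $B_2\subset H_2$ yields $V_j(W_1)>V_j(B_2^{n-1})$. To cover the remaining range $p\in[p^\ast,1)$, one replaces the unit $(n-1)$-ball in the construction by a suitable family of origin-symmetric $(n-1)$-dimensional bodies $E=E_p$---for instance elongated ellipsoids whose eccentricity is tuned to $p$---and runs the same degenerate-limit argument. The crux of the proof, which I expect to be the main technical obstacle, is then to exhibit, for every $p\in[0,1)$, such an $E_p$ for which the associated Wulff shape $W_p^{E_p}$ of $((1-\lambda)h_{E_p}^p+\lambda h_{E_p'}^p)^{1/p}$ still satisfies $V_j(W_p^{E_p})<V_j(E_p)$: one bounds $V_j(W_p^{E_p})$ from above via a circumscribed coordinate box determined by the values $f_p^{E_p}(\pm e_i)$, and compares with an explicit lower bound on $V_j(E_p)$, showing that as the eccentricity of $E_p$ grows, the ``$L_p$-loss'' strictly dominates the ``Brunn--Minkowski excess'' for any fixed $p<1$.
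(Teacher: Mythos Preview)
Your construction for $p=0$ is correct and rather elegant: the limit Wulff shape $W_0$ is indeed the unit ball $B_2^{n-2}$ in $H_1\cap H_2$, and $V_j(B_2^{n-2})<V_j(B_2^{n-1})$ then yields the desired strict inequality, which persists for small $\epsilon>0$ by continuity. Since $W_p$ decreases to $W_0$ as $p\downarrow 0$, you also get the result for $p$ in some right-neighbourhood of $0$. (Note, incidentally, that the monotonicity $V_j(W_p)\nearrow$ in $p$ points the \emph{wrong} way for your purposes; only the continuity is actually doing work there.)

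The genuine gap is the range $p\in[p^\ast,1)$. What you write is not a proof but a programme: you propose replacing the ball by an elongated ellipsoid $E_p$ and assert that ``as the eccentricity grows, the $L_p$-loss strictly dominates the Brunn--Minkowski excess'', but you neither construct $E_p$ nor carry out any comparison, and you yourself flag this as ``the main technical obstacle''. Bounding $V_j$ of a Wulff shape via a circumscribed coordinate box is far too crude for $j\geq 2$ (boxes have much larger $V_j$ than the ellipsoids they circumscribe), so it is not clear this route can succeed. As written, your argument establishes the theorem only for $p$ in an unspecified neighbourhood of $0$, which is exactly the content of the earlier Theorem~\ref{thmold-2} that the present theorem is meant to strengthen.

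The paper's proof avoids this difficulty by an entirely different mechanism: induction on the dimension $n$. The seed is the case $j=1$, where the \emph{reverse} inequality \eqref{eq-reverse-for-j=1} (Minkowski's inequality for $p<1$) furnishes counterexamples for every $p\in[0,1)$. For the inductive step one takes a counterexample $(K,L)$ to the $(j-1)$-st inequality in $\theta^\perp\cong\R^{n-1}$ and forms the cylinders $K'_s=K+[-s\theta,s\theta]$, $L'_s=L+[-s\theta,s\theta]$. Two exact identities then finish the job: the $L_p$-sum of such orthogonal direct sums splits, $(1-\lambda)\cdot K'_s+_p\lambda\cdot L'_s=((1-\lambda)\cdot K+_p\lambda\cdot L)'_s$ (Lemma~\ref{lp-BM-cartesian products}), and $V_j(A'_s)=V_j(A)+s\,e_{n,j}V_{j-1}(A)$. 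Letting $s\to\infty$ makes the $V_{j-1}$-terms dominate, turning the $(j-1)$-counterexample into a $j$-counterexample. This works uniformly in $p\in[0,1)$, with no case analysis and no limiting degeneration.
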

On the other hand, we improve Theorem \ref{thmold-1} by removing the assumption that at least one of $K$ and $L$ is the Euclidean ball and by allowing $p$ to take negative values within some range. Below, we state our second main result. 
\begin{theorem}
   \label{positive_theorem}
     Fix $n\geq3$, $j\in\{2,\dots,n-1\}$ and $p\in(\frac{2n-j-nj}{n-1},1)$ (notice that $\frac{2n-j-nj}{n-1}<0$). There exists $\delta>0$ that depends only on $n,p$, such that for all $\lambda\in(0,1)$ and for every symmetric convex bodies $K$ and $L$, satisfying $h_K,h_L\in C^2$ and $\max\{\|h_K-1\|_{C^2},\|h_L-1\|_{C^2}\}<\delta$, inequality \eqref{L_p-BM_intrinsic} holds true. Equality holds if and only if $K$ and $L$ are dilates.
   \end{theorem}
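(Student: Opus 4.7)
The plan is to follow the local-to-global strategy for $L_p$-Brunn-Minkowski inequalities due to Kolesnikov-Milman \cite{KoM22} and adapted to intrinsic volumes in \cite{BCPR}: reduce \eqref{L_p-BM_intrinsic} to concavity of $V_j$ along the $L_p$-path, differentiate twice to obtain an infinitesimal Poincar\'e-type inequality on $\Sn$, verify this sharply at the ball via spherical harmonic decomposition, and propagate it to a $C^2$-neighbourhood by perturbation.

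\textbf{Reduction to an infinitesimal inequality.} For $t \in [0,1]$ set $K_t := (1-t)\cdot K +_p t \cdot L$. Inequality \eqref{L_p-BM_intrinsic} is equivalent to concavity of $t \mapsto V_j(K_t)^{p/j}$. Starting from the representation
$$V_j(K) = \frac{\binom{n}{j}}{n\kappa_{n-j}} \int_{\Sn} h_K \, dS_{j-1}(K, \cdot),$$
and using standard first and second variation formulas for the $(j-1)$-th area measure of a $C^2$ convex body (cf.\ \cite{Sch14}), a direct computation of $\frac{d^2}{dt^2} V_j(K_t)^{p/j}\big|_{t=0}$ reduces the desired concavity to the following infinitesimal $L_p$-Minkowski inequality, to be verified at every symmetric $K$ close to $B_2^n$ in $C^2$ and every even $\psi \in C^2(\Sn)$:
\begin{equation}\label{eq-plan-infin}
(j-p) \int_{\Sn}(\psi - \bar\psi_K)^2\, d\mu_K \;\le\; \frac{j-1}{n-1}\, \mathcal E_K(\psi),
\end{equation}
where $\mu_K$ is a finite measure derived from $S_{j-1}(K,\cdot)$, $\bar\psi_K$ is its $\mu_K$-mean, and $\mathcal E_K$ is a Dirichlet-type quadratic form arising from the linearised $(j-1)$-th area-measure operator at $K$; at $K = B_2^n$ one has $d\mu_B = d\sigma$ and $\mathcal E_B(\psi) = \int_{\Sn} |\nabla \psi|^2\, d\sigma$.

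\textbf{Spectral analysis at the ball.} Expanding $\psi = \sum_{k \ge 0}\psi_k$ in spherical harmonics with $-\Delta \psi_k = k(k+n-2)\psi_k$, inequality \eqref{eq-plan-infin} at $K=B_2^n$ decouples: the $k=0$ mode drops out, and even-symmetry of $K,L$ leaves only $k\in\{2,4,\dots\}$, where the inequality reduces to $(j-p) \le (j-1)\,k(k+n-2)/(n-1)$ for every even $k \ge 2$. The binding constraint is $k=2$, giving $(j-p) \le 2n(j-1)/(n-1)$, which rearranges to the sharp threshold $p \ge (2n - j - nj)/(n-1)$. For $p$ strictly above this threshold, one gains a uniform spectral gap $\varepsilon = \varepsilon(n,p,j) > 0$ across all even $k \ge 2$.

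\textbf{Perturbation, conclusion, and main obstacle.} Since $\mu_K$ and $\mathcal E_K$ depend continuously on $h_K$ in $C^2$, self-adjoint perturbation theory on $\Sn$ combined with the uniform spectral gap at $B_2^n$ produces $\delta = \delta(n,p) > 0$ such that \eqref{eq-plan-infin} holds for every symmetric $K$ with $\|h_K - 1\|_{C^2} < \delta$, strictly unless $\psi$ consists only of the $k=0$ mode (i.e.\ the perturbation corresponds to a dilation). Shrinking $\delta$ further so that the entire path $K_t$ stays in the $\delta$-neighbourhood (by $C^2$-continuity of $L_p$-Minkowski addition) yields concavity of $V_j(K_t)^{p/j}$, hence \eqref{L_p-BM_intrinsic}; the equality analysis then forces $K$ and $L$ to be dilates. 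The main technical obstacle is the second-variation computation in Step 1: the linearisation of the $(j-1)$-th elementary symmetric function of the spherical Hessian-plus-identity and the non-linear correction terms of the $L_p$-Wulff shape both contribute to $\ddot V_j(K_t)$, and several delicate cancellations (using Bochner's identity $\int|\nabla^2 v|^2 = \int(\Delta v)^2 - (n-2)\int|\nabla v|^2$ on $\Sn$) are needed to arrive at \eqref{eq-plan-infin} in a form with clean spectral behaviour. Matching the constants so that the critical mode $k=2$ yields exactly the threshold $(2n-j-nj)/(n-1)$ is the delicate core of the argument; the perturbation extension in Step 3 is standard but requires uniform $C^2$-control of the spectral gap.
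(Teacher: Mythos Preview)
Your plan is correct and mirrors the paper's proof: both reduce \eqref{L_p-BM_intrinsic} to concavity of $t\mapsto V_j(K_t)^{p/j}$, compute the second variation to obtain a Poincar\'e-type inequality governed by the operator $T_K^j$ (whose ball version is $-\frac{1}{n-1}\Delta_{\Sn}$), identify the threshold $p>(2n-j-nj)/(n-1)$ from the first even non-constant eigenvalue $\lambda_{1,e}(-\Delta_{\Sn})=2n$, and extend to a $C^2$-neighbourhood by a perturbation/contradiction argument. The only notable difference is packaging: the paper avoids Bochner-type identities by writing the second variation directly via mixed volumes and the self-adjoint operator $T_K^j$ (Proposition~\ref{equivalent_logBM}, Lemma~\ref{lemma-main}), and it treats $p=0$ separately by reducing to the already-established $p<0$ case since the second-derivative formula involves division by $p$---a small point you should also address.
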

\noindent
The proof of Theorem \ref{positive_theorem} will be a modification of the method by Kolesnikov and Milman \cite{KoM22}.

For $j\in\{1,\dots,n-1\}$ and a convex body $K$, let $S_j(K,\cdot)$ be its {\it area measure} of order $j$ (see next section for definition and basic facts concerning area measures). If $S_j(K,\cdot)$ is absolutely continuous with respect to $\Ha$ and $h_K\in C^2$, then its density $s_j(K,\cdot)$ can be computed as the normalized $j$-th elementary function of the {\it principal radii of curvature} of $K$ at a given point (see next section for more information).
Let $p\in\R$. The $L_p$ area measure of order $j$ of $K$ is defined as the measure  given by 
$$dS_{j,p}(K,\cdot)=h_K^{1-p}dS_j(K,\cdot).$$
The {\it $L_p$-Christoffel-Minkowski problem} asks the following: Given $p\in\R$, $j\in\{1,\dots,n-1\}$ and a Borel measure $\mu$ on $\Sn$, with center of mass at the origin, find necessary and sufficient conditions so that there exists a convex body $K$, such that \begin{equation}\label{eq-L_p-Christoffel-Minkowski-problem}S_{j,p}(K,\cdot)=\mu.\end{equation}And if such body exists, does it have to be unique? We remark that, usually, the case $j=n-1$ (the $L_p$-Minkowski problem) is not included in the formulation of the $L_p$-Christoffel-Minkowski problem. Our choice to include the case $j=n-1$ in \eqref{eq-L_p-Christoffel-Minkowski-problem} is for convenience in the presentation.

Under the assumption that $S_j(K,\cdot)$ is absolutely continuous with respect to $\Ha$, equation \eqref{eq-L_p-Christoffel-Minkowski-problem} can be written as
\begin{equation}\label{eq-L_pcms}
h_K^{1-p}s_j(K,\cdot)=g,    
\end{equation}where $g:\Sn\to[0,\infty)$ is a function. In fact, when we say that $K$ satisfies \eqref{eq-L_pcms}, we will always assume that $S_j(K,\cdot)$ is absolutely continuous. A function $h$ defined on $\Sn$ will be called a classical solution of \eqref{eq-L_pcms} if $h=h_K\in C^2(\Sn)$, for some convex body $K$ that satisfies \eqref{eq-L_pcms}.

The case $j=n-1$ is the {\it $L_p$-Minkowski problem} initiated (and solved) by Minkowski \cite{Min03} in the classical case $p=1$  and by Lutwak \cite {Lut93a} for general $p$. As of the existence, the $L_p$-Minkowski problem should be considered essentially solved, see e.g. \cite{BLYZ13,BBCY19, CLZ19, CY, ChW06, HLYZ2, LGWa, LGWc, Sar}. Other variants of the Minkowski problem have been extensively studied in the past years (see for instance \cite{BLYZ19, BLYZ20, FHL22, HLYZ10, HeP18, HLYZ16,HLYZ18}).
The problem of existence for $j<n-1$, remains wide open although certain important results have been obtained \cite{BIS23,Gu21,GuM03,GuX18}.

As of uniqueness, it is known that (see e.g. \cite{HLYZ2}, \cite{GuX18}) uniqueness holds if $p> 1$, $p\neq j+1$ and holds up to translation if $p=1$. Actually, it is now classical (see e.g. \cite{BLYZ12}) that 
the $L_p$-Brunn-Minkowski inequality together with its equality cases implies uniqueness for the $L_p$-Minkowski problem. Thus, if the $L_p$-Brunn-Minkowski inequality \eqref{L_p-BM} were true, then uniqueness in the symmetric $L_p$-Minkowski problem would follow.  Consequently, by \cite{KoM22} and \cite{CHLL20}, it follows that uniqueness for the symmetric $L_p$-Minkowski does hold for $p<1$ if $p$ is close to 1. In fact, the same approach works for all $j$ (see \cite[Theorem 1.6]{BCPR}). Therefore, by Theorem \ref{positive_theorem}, we have the following.
\begin{corollary}\label{cor-local-uniqueness}
There exists $\eta>0$, that depends only on $n$ and $p$, such that if $j\in\{1,\dots,n-2\}$ and $K_1,K_2$ are symmetric convex bodies such that $h_{K_i}\in C^2$ and $\|h_{K_i}-1\|_{C^2}<\eta$, $i=1,2$, then the equation $$h_{K_1}^{1-p}s_j(K_1,\cdot)=h_{K_2}^{1-p}s_j(K_2,\cdot)$$implies $K_1=K_2$.    
\end{corollary}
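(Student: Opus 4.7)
The plan is to deduce the corollary from Theorem \ref{positive_theorem} by the standard reduction from an $L_p$-Brunn-Minkowski inequality (with its equality cases) to uniqueness for the associated $L_p$-Minkowski-type problem -- the same machinery used in \cite[Theorem 1.6]{BCPR} with the previously known $L_p$-BM inputs. The hypothesis of the corollary is precisely $dS_{j,p}(K_1,\cdot)=dS_{j,p}(K_2,\cdot)$. Since $j\in\{1,\dots,n-2\}$ forces $j+1\in\{2,\dots,n-1\}$, Theorem \ref{positive_theorem} applied to $V_{j+1}$ furnishes, for $p$ in the admissible range for $j+1$, the $L_p$-Brunn-Minkowski inequality together with its equality characterization on a $C^2$-neighborhood of $B_2^n$ of some radius $\delta=\delta(n,p)$; I would set $\eta:=\delta$, so that both $K_1,K_2$ lie in this neighborhood.

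Differentiating the $L_p$-BM inequality along the curve $K_\lambda:=(1-\lambda)\cdot K_1+_p\lambda\cdot K_2$ at $\lambda=0^+$, using that $h_{K_\lambda}$ has first variation $f=\tfrac{1}{p}h_{K_1}^{1-p}(h_{K_2}^p-h_{K_1}^p)$ together with the standard first-variation formula for $V_{j+1}$, produces the infinitesimal $L_p$-Minkowski inequality for intrinsic volumes,
\begin{equation*}
V_{j+1,p}(K_1,K_2):=\frac{1}{\alpha}\int_{\Sn}h_{K_2}^p\,dS_{j,p}(K_1,\cdot)\;\geq\;V_{j+1}(K_1)^{(j+1-p)/(j+1)}\,V_{j+1}(K_2)^{p/(j+1)},
\end{equation*}
where $\alpha=\alpha(n,j)$ is the dimensional constant in $\int h_K\,dS_j(K,\cdot)=\alpha V_{j+1}(K)$. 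Using the hypothesis, the left-hand side equals $\frac{1}{\alpha}\int h_{K_2}^p\,dS_{j,p}(K_2,\cdot)=\frac{1}{\alpha}\int h_{K_2}\,dS_j(K_2,\cdot)=V_{j+1}(K_2)$. Since $1-p/(j+1)>0$, this simplifies to $V_{j+1}(K_2)\geq V_{j+1}(K_1)$. Swapping $K_1$ and $K_2$ yields the reverse inequality, so $V_{j+1}(K_1)=V_{j+1}(K_2)$ and equality holds in the infinitesimal $L_p$-Minkowski inequality.

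The equality characterization of Theorem \ref{positive_theorem} transfers to the infinitesimal form by a standard tangent-line argument (equality in the derivative, together with concavity, forces $V_{j+1}(K_\lambda)^{p/(j+1)}$ to coincide with its chord on $[0,1]$, hence equality in $L_p$-BM), so $K_1$ and $K_2$ are dilates; combined with $V_{j+1}(K_1)=V_{j+1}(K_2)$, the dilation factor must be $1$, giving $K_1=K_2$. The main points to verify are that the first-variation computation is legitimate (i.e.\ that $K_\lambda$ stays inside the $C^2$-neighborhood of $B_2^n$ so that $h_{K_\lambda}^p=(1-\lambda)h_{K_1}^p+\lambda h_{K_2}^p$ really is a support function, ensuring the Wulff-shape cut-off is inactive) and that the equality case of the integrated inequality descends correctly to the infinitesimal one; both are routine for sufficiently small $\eta$.
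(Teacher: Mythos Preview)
Your proposal is correct and follows essentially the same approach as the paper: the paper does not give a detailed proof of the corollary but simply invokes the classical reduction (citing \cite{BLYZ12} and \cite[Theorem~1.6]{BCPR}) from an $L_p$-Brunn--Minkowski inequality with equality characterization to uniqueness in the associated $L_p$-Minkowski-type problem, and your write-up is precisely the standard filling-in of those details, correctly using $V_{j+1}$ in Theorem~\ref{positive_theorem} for the order-$j$ area measure. The only minor caveat is that $\eta$ should be chosen via Lemma~\ref{neighborhood_existence} (not directly as the $\delta$ of Theorem~\ref{positive_theorem}) to guarantee $K_\lambda$ stays in the good neighborhood, but you already flag this at the end.
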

Results of this type have previously been obtained for the $L_p$-Minkowski problem by e.g. \cite{CLM17}, \cite{KoM22}, \cite{BCPR}. We should remark that for $j=n-1$, uniqueness does not hold in \eqref{eq-L_pcms}, even in the symmetric case \cite{JLW15}, \cite{Mila}, \cite{BoSa}.

The case where the right hand side $g$ in \eqref{eq-L_pcms} is constant has been extensively studied. For $j=n-1$, $p=0$, Firey \cite{Fir74} proved that the equation has a unique even classical solution (the constant function). Brendle, Choi and Daskalopoulos \cite{BCD17}, improving a result of Andrews \cite{And99}, proved that if $j=n-1$ and $p>-n$, then the same uniqueness result holds without the evenness assumption. Other proofs of the latter and extensions were established in \cite{Sar22} and \cite{IvMi}. Classification of solutions for $j=n-1$, $p\leq -n$ of \eqref{eq-L_pcms}, with $g\equiv 1$ and/or non-uniqueness results appear in \cite{Calabi}, \cite{Pet85}, \cite{And00}, \cite{And02}. The case $j<n-1$ was treated by Chen \cite{Che} ($p>1-j$) and McCoy \cite{McC11} ($p=1-j$) in the general (non-symmetric) case and by Ivaki, Milman \cite{IvMi} in the symmetric case but in a more general setting. We state the result below only in the symmetric case.
\begin{oldtheorem}\label{oldthm-isotropic}\cite{Che}, \cite{IvMi}
Let $j\in\{1,\dots,n-2\}$, $p\geq 1-j$ and $g\equiv1$. Then, under some regularity assumptions {on $h$ (at least $h\in C^2(\Sn)$)}, $h\equiv1$ is the unique classical solution to \eqref{eq-L_pcms}.
\end{oldtheorem}
A close inspection of the proof by Ivaki and Milman \cite{IvMi} shows that a slightly stronger statement is actually true. We state it in Theorem \ref{thm-old-D} (see next section).

Our third main result can be viewed as a stability result of Theorem \ref{oldthm-isotropic}.
\begin{theorem}\label{uniqueness_Christoffel_Minkowski}
Let $\alpha\in(0,1)$ and $p\in[0,1)$, $j\in\{1,\dots,n-2\}$, such that $(p,j)\neq (0,1)$. Then, there exists
$\delta=\delta(n,p)>0$, such that if $g$ is an even function defined on $\Sn$, then equation \eqref{eq-L_pcms} has a unique even classical solution, provided $\|g-1\|_{C^\alpha}<\delta$.
\end{theorem}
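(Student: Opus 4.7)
The strategy is a compactness-contradiction argument whose endgame is Corollary \ref{cor-local-uniqueness}. Suppose the claimed $\delta$ does not exist: then there is a sequence of even functions $g_m:\Sn\to(0,\infty)$ with $\|g_m-1\|_{C^\alpha}\to 0$ and, for each $m$, two distinct even classical solutions $K_1^m\neq K_2^m$ of \eqref{eq-L_pcms} with $g=g_m$. The aim is to show that, for $m$ large, both $h_{K_1^m}$ and $h_{K_2^m}$ are so close to a common constant in $C^2(\Sn)$ that the local uniqueness in Corollary \ref{cor-local-uniqueness}, after a trivial rescaling, forces $K_1^m=K_2^m$ and yields a contradiction.

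The first and most substantive step --- the ingredient singled out in the abstract --- is an a priori two-sided $L^\infty$ bound: every even classical solution $h_K$ of \eqref{eq-L_pcms} satisfies $r\leq h_K\leq R$ on $\Sn$ with $r,R$ depending only on $n,p,j$ and $\|\log g\|_{L^\infty}$, and with $r,R\to 1$ as $\|\log g\|_{L^\infty}\to 0$. Under evenness the lower bound is automatic once the upper bound is known, because John's theorem applied to the symmetric body $K$ gives $\min_{\Sn} h_K\geq(\max_{\Sn} h_K)/\sqrt{n-1}$. For the upper bound I would integrate \eqref{eq-L_pcms} against carefully chosen test support functions (typically the constant $1$ and $h_K$ itself), turn the resulting integrals into mixed volumes of the form $V(K,\dots,K,B_2^n,\dots,B_2^n)$, and close the loop via the Aleksandrov-Fenchel and Minkowski second inequalities --- in the spirit of the Chou-Wang and B\"or\"oczky-Lutwak-Yang-Zhang techniques for the $L_p$-Minkowski problem, adapted to the $j$-th area measure. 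This is the main obstacle; the dependence must be on $\|\log g\|_{L^\infty}$ alone, not on any H\"older norm of $g$ or on a priori regularity of $h_K$.

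With uniform $L^\infty$ bounds in hand, \eqref{eq-L_pcms} becomes a uniformly elliptic $j$-Hessian-type equation on $\Sn$ whose right-hand side $g_m/h_{K_i^m}^{1-p}$ is uniformly bounded in $C^\alpha$ and uniformly positive. The Caffarelli-Nirenberg-Spruck / Guan-Ma regularity theory for Hessian equations on the sphere then supplies uniform $C^{2,\beta}$ bounds, and after passing to subsequences $h_{K_i^m}\to h_i^\ast$ in $C^2(\Sn)$, with each $h_i^\ast$ an even classical solution of $h^{1-p}s_j(\cdot)=1$. Theorem \ref{oldthm-isotropic} (or its refinement Theorem \ref{thm-old-D}) then forces $h_i^\ast\equiv c_0$, where $c_0>0$ is the unique constant satisfying $c_0^{j+1-p}s_j(B_2^n,\cdot)=1$; note that $j+1-p>0$ throughout the admissible range, and the hypothesis $(p,j)\neq(0,1)$ is exactly what ensures that both Theorem \ref{oldthm-isotropic} and Corollary \ref{cor-local-uniqueness} are applicable.

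Finally, after rescaling each $K_i^m$ by $c_0^{-1}$ (and $g_m$ by the corresponding factor), both bodies satisfy the same rescaled $L_p$-Christoffel-Minkowski equation and have support functions converging to $1$ in $C^2(\Sn)$. For $m$ sufficiently large, both lie in the $\eta$-neighbourhood of Corollary \ref{cor-local-uniqueness}, which forces $c_0^{-1}K_1^m=c_0^{-1}K_2^m$, i.e.\ $K_1^m=K_2^m$, contradicting the choice of the sequence. Apart from the a priori $L^\infty$ estimate in step one, every other ingredient is a standard elliptic-compactness package; the genuinely new and difficult input is precisely that estimate in terms of $\|\log g\|_{L^\infty}$ alone.
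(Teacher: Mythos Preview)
Your overall compactness--contradiction strategy, terminating in Corollary~\ref{cor-local-uniqueness}, matches the paper's, but two of the intermediate steps contain genuine gaps, and the paper closes them in a quite different way.

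First, the claim that John's theorem gives $\min_{\Sn} h_K\geq(\max_{\Sn} h_K)/\sqrt{n-1}$ for a symmetric body is simply false: the John ellipsoid may itself be arbitrarily eccentric (a thin symmetric slab), so symmetry alone gives no control of the ratio $\max h_K/\min h_K$. The paper does prove a lower bound (Theorem~\ref{lower_bound_sup_fun}) by a completely different argument --- passing to a limiting lower-dimensional body and using a geodesic-projection identity for $s_j$ (Proposition~\ref{geodesic_proj}) --- and then remarks explicitly that this lower bound is \emph{not used} in the proof of Theorem~\ref{uniqueness_Christoffel_Minkowski}.

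Second, the step ``two-sided $L^\infty$ bounds $\Rightarrow$ uniformly elliptic $j$-Hessian equation $\Rightarrow$ $C^{2,\beta}$ via CNS/Guan--Ma'' does not go through as written. Bounds on $h_K$ do not control the eigenvalues of ${\cal A}h_K$, so uniform ellipticity is not automatic; and the Guan--Ma $C^2$ estimates require $g\in C^{1,1}$ or $C^2$, not merely $C^\alpha$. The paper avoids Hessian-equation regularity altogether by reversing the order of operations. Using only the \emph{upper} bound (Lemma~\ref{upper_bound_sup_fun}) and Blaschke selection, it first proves $K_m\to B_2^n$ in Hausdorff distance \emph{before} any $C^2$ compactness, via the Ivaki--Milman integral inequality (Theorem~\ref{thm-old-D}, packaged as Lemma~\ref{lemma-IM}) together with the soft fact that a convex body with $\nabla h_K=0$ a.e.\ is a ball (Proposition~\ref{prop-grad=0 a.e.}). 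Once $h_{K_m}\to 1$ in $C^0$, the Lipschitz property of support functions upgrades this to $C^\alpha$ (Lemma~\ref{lemma-C^a-convergence}), whence $s_j(K_m,\cdot)=g_m h_{K_m}^{p-1}\to 1$ in $C^\alpha$; the inverse function theorem for the \emph{classical} ($p=1$) Christoffel--Minkowski map (Lemma~\ref{lemma-regularity-sub}(ii)) then delivers the $C^{2,\alpha}$ bound directly, and the contradiction with Corollary~\ref{cor-local-uniqueness} follows. Thus the genuinely new inputs are the upper-bound estimate and the Ivaki--Milman-based identification of the limit at the Hausdorff level; the nonlinear regularity theory you invoke is never needed.
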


We are not aware of any results that indicate whether Theorem \ref{uniqueness_Christoffel_Minkowski} holds or not without the restriction $\|g-1\|_{C^\alpha}<\delta$.%}

Results in the style of Theorem \ref{uniqueness_Christoffel_Minkowski} have previously been obtained for the $L_p$-Minkowski (case $j=n-1$) problem. See Chen, Huang, Li, Liu \cite{CHLL20} for the symmetric case and Chen, Feng, Liu \cite{CFL}, B\"or\"oczky, Saroglou \cite{BoSa} for the extension to the non-symmetric case. 
The proof in the case $j<n-1$ requires some new ideas and is more elaborate than the corresponding (symmetric) case $j=n-1$. Nevertheless, the question of whether Theorem \ref{uniqueness_Christoffel_Minkowski} holds without the evenness assumption is, to the best of our knowledge, open and (in our opinion) deserves some attention. Some of the main ingredients %} 
for the proof of Theorem \ref{uniqueness_Christoffel_Minkowski} are Theorem \ref{thm-old-D} below by Ivaki and Milman and a non-degeneracy estimate (Section 5, Theorem \ref{lower_bound_sup_fun} below), that appears to be more general (in the convex case) than the one established in \cite{Gu21}.
\\
\\
{\bf Acknowledgement.} We are grateful to K\'aroly B\"or\"oczky, Emanuel Milman and Pengfei Guan for useful comments concerning this manuscript and Leo Brauner for providing us reference \cite{GoWeKi}. We also thank the anonymous referee for carefully reading the manuscript and for valuable comments, especially for indicating to us a shorter proof of Proposition 5.7.
\section{Preliminaries}\label{Preliminaries}
A general reference for this section is the book of Schneider \cite{Sch14}. Let $A^s=(A_{ik}^s)_{i,k=1}^N$ be $N\times N$ matrices, $s=1,\dots,N$. Their {\it mixed discriminant} $D_N(A^1,\dots,A^N)$ is defined by 
$$D_N(A^1,\dots,A^N)=\frac{1}{N!}\frac{\partial^N}{\partial t_1\dots\partial t_N}\det\left(t_1A^1+\dots+t_NA^n\right).$$
Alternatively, one can define $D_N(A^1,\dots,A^N)$ inductively. Denote by  $M^{i,k}(A^1)$ the minor that results if we exclude the $i$-th row and the $k$-th column of $A^1$. Then, 
$$D_N(A^1,\dots,A^N)=\sum_{i,k=1}^NA^1_{i,k}Q^{i,k}(A^2,\dots,A^N),$$
where $$Q^{i,k}(A^2,\dots,A^N):=\frac{(-1)^{i+k}}{N}D_{N-1}(M^{i,k}(A^2),\dots,M^{i,k}(A^N))$$
and $$D_N(A,\dots,A)=\det(A).$$Notice that $D_N(A^1,\dots,A^N)$ is positive (resp. non-negative), if $A^1,\dots, A^N$ are positive definite (resp. positive semi-definite).

%We denote by $D$ the gradient operator in $\R^n$ and by $\nabla$ the spherical gradient. Then, for a differentiable function $f:\R^n\to \R$ and for $u\in\Sn$, we have $\nabla f(u)=Df(u)-\langle Df(u),u \rangle$. 
Let $f:\Rn\to \R$ be a function, such that $f|_{\Rn\setminus\{o\}}\in C^2$. We denote by $D$ the usual gradient operator in $\R^n$ and by $\nabla$ the spherical gradient. That is, for $p\in\Sn$, we have $\nabla f(p)=Df(p)-\langle Df(p),p \rangle p$. Denote by $\nabla^2f$ the Hessian of $f$ on $\Sn$, with respect to the standard round metric $\delta$ on $\Sn$. We are particularly interested in the operator ${\cal A}$, given by
${\cal A}f:=\nabla^2f+\delta f$. It turns out that
if $\{e_1,\dots,e_{n-1}\} $  is a local orthonormal frame on $\Sn $, then ${\cal A}f$ can be represented with respect to $\{e_i\}$ as follows: ${\cal A}f=(\nabla_{i,k}f+\delta_{ik}f)_{i,k=1}^{n-1}$,  where $\nabla_{i,k}f$ denotes second order covariant differentiation with respect to $\{e_i\}$ and $\delta_{ik}$ is the standard Kronecker symbol. Alternatively, if $f$ happens to be positively homogeneous, for $p\in \Sn$, one has
\begin{equation}\label{eq-hessian-alter}
{\cal A}f(p)=(D_{e_i}D_{e_k}f(p))_{i,k=1}^{n-1},
%=D^2f(u)|_{T_u\Sn},    
\end{equation}
where $D_{e_i}$ denotes the usual directional derivative in $\R^n$.
%and $D^2f(p)|_{T_p\Sn}$ is the restriction of the usual Hessian of $f$ at $p$ (viewed as a bilinear form), to  $T_p\Sn\times T_p\Sn$. 
Notice that if $f$ is an arbitrary support function (without any regularity assumption), then its second derivative exists for almost every point in $\R^n$, hence (by homogeneity) for almost every point in $\Sn$. Thus, the expression ${\cal A}f(p)$ is meaningful and positive semi-definite (due to convexity) for almost every $p\in \Sn$. 

Let $h_1,\dots,h_{n-1}:\Sn\to \R$ be support functions of class $C^2$. Define the {\it mixed curvature function} $s(h_1,\dots,h_{n-1},\cdot):\Sn\to \R$ of $h_1,\dots,h_{n-1}$ by
$$s(h_1,\dots,h_{n-1},\cdot):=D_{n-1}({\cal A}h_1(\cdot),\dots,{\cal A}h_{n-1}(\cdot)).$$
Set, also, $s_j(K_1,\cdot)=s_j(h_1,\cdot):=s(h_1[j],1[n-j-1],\cdot)$ and $s(K_1,\dots,K_{n-1},\cdot):=s(h_1,\dots,h_{n-1},\cdot)$, if $h_i=h_{K_i}$, $i,j=1,\dots,n-1$.
Denote by $\lambda_1,\dots,\lambda_{n-1}$ the eigenvalues of ${\cal A}h_{K_1}$, called the principal radii of curvature of $K_1$ at a given point $p\in\Sn$. Then, it turns out that the function $s_j$ can be computed as the normalized $j$-th elementary symmetric function of the $\lambda_i$:
$$s_j=\binom{n-1}{j}^{-1}\sum_{1\leq i_1<\dots<i_j\leq n-1}\lambda_{i_1}\dots\lambda_{i_j}.$$

For convex bodies $K_1,\dots,K_n$, their {\it mixed volume} $V(K_1,\dots,K_n)$ is defined by
$$V(K_1,\dots,K_n)=\frac{1}{n!}\sum_{k=1}^n(-1)^{n+k}\sum_{1\leq i_1<\dots<i_k\leq n}V_n(K_{i_1}+\dots+K_{i_k}).$$
Clearly, $V(K_1,\dots,K_1)=V_n(K_1)$. It is known that the mixed volume functional is positive on convex bodies, linear in each argument with respect to positive Minkowski linear combinations, monotone with respect to inclusion in each argument and invariant under permutations of its arguments. For positive integers $r_1,\dots,r_m$ that sum to $n$, we write $V(K_1[r_1],\dots, K_m[r_m])$ for the mixed volume of $r_i$ copies of $K_i$, $i=1,\dots,m$. 

It turns out that the $j$-th intrinsic volume is equal to $$V_j(K)=d_{n,j}V(K[j],B_2^n[n-j]),$$
where  $d_{n,j}=\binom{n}{j}\kappa_{n-j}^{-1}$ and $\kappa_d$ is the volume of the $d$-dimensional unit ball.
%$V(K_1,\dots,K_n)$ is the mixed volume of the convex bodies $K_1,\dots,K_n$ and (whenever $\sum_{i=1}^mr_i=n$) we abbreviate $V(K_1[r_1],K_2[r_2],\dots,K_m[r_m])$ for the mixed volume of the bodies $K_1,\dots,K_m$ if $K_i$ appears $r_i$ times. Recall that the mixed volume functional is the unique non-negative symmetric function $V:\KKn:=\KK\times\dots\times\KK\to\R$ such that 
%$$Vol(\lambda_1K_1+\dots+\lambda_mK_m)=\sum_{i_1,\dots,i_n=1}^m\lambda_{i_1}\dots\lambda_{i_n}V(K_{i_1},\dots,K_{i_n})$$
%for every $K_1,\dots,K_m\in\KK,~\lambda_1,\dots,\lambda_m\geq0$.

The mixed area measure $S(K_1,\dots,K_{n-1},\cdot)$ of $K_1,\dots,K_{n-1}$ is defined to be the (unique) Borel measure on $\Sn$, such that for every convex body $L$, it holds
$$V(L,K_1,\dots,K_{n-1})=\frac{1}{n}\int_{\Sn}h_LdS(K_1,\dots,K_{n-1},\cdot).$$
It is known that $S(K_1,\dots,K_1,\cdot)=S(K_1,\cdot)$. The {\textit{area measure of order}} $j$, $S_j(K_1,\cdot)$ of $K_1$ is given by $S_j(K_1,\cdot)=S(K_1[j],B_2^n[n-j-1],\cdot)$. Also, as with mixed volumes, the mixed area measure functional is linear in each argument with respect to positive Minkowski linear combinations and invariant under permutations of its arguments. In the notation of volume, intrinsic volume, mixed volume and mixed area measure we will sometimes replace convex bodies by their support functions. For instance, $V(h_{K_1},\dots h_{K_n}):=V(K_1,\dots,K_n)$.

If it happens that $h_{K_i}\in C^2(\Sn)$ and ${\cal A} h_{K_i}>0$ everywhere on $\Sn$, for $i=1,\dots,n-1$, then $S(K_1,\dots,K_{n-1},\cdot)$ is absolutely continuous with respect to $\Ha$ and its density equals $s(K_1,\dots,K_{n-1},\cdot)$. In particular, $S_j(K_1,\cdot)$ is absolutely continuous with respect to $\Ha$ with density $s_j(K_1,\cdot)$. Nevertheless (see Hug \cite{Hug99}, \cite{Hug02}), even if we do not impose any regularity assumptions on $K_1$, $s_j(K_1,\cdot)$ (defined almost everywhere) is the density function of the absolutely continuous part of $S_j(K_1,\cdot)$ in its Lebesgue decomposition with respect to $\Ha$.

For the rest of the paper, ${\cal S}^n$ will denote the class of symmetric convex bodies, such that $h_K\in C^2(\Sn)$ and ${\cal A}h_K>0$ everywhere on $\Sn$. 

The following theorem is due to Ivaki and Milman (by a small modification of the proof of \cite[Theorem 1.6]{IvMi}).
\begin{oldtheorem}\cite{IvMi}\label{thm-old-D}
Let $K\in {\cal S}^n$, such that $h_K$ is sufficiently smooth,  $j\in\{1,\dots,n-2\}$ and $p\in\R$. Set $h:=h_K$, $\sigma_j(\cdot):=\binom{n-1}{j}s_j(h,\cdot)$, $\sigma_j^{ik}:=j\binom{n-1}{j}Q^{i,k}({\cal A}h[j-1],I[n-j-1])$ and $\lambda_1,\dots,\lambda_{n-1}$ for the principal radii of curvature of $K$. Then, for arbitrary $c>0$, it holds
\begin{eqnarray*}
&&\int_{\Sn}\left(\left( c(p+1)+(j-2)h^{1-p}\sigma_j\right)h^p|\nabla h|^2+2h\sum_{i=1}^{n-1}\lambda_i\sigma_j^{ii}(\nabla_ih)^2\right)d\Ha\\
&&\leq \int_{\Sn}(h^{1-p}\sigma_j-c)h^{p+1}\left(\sigma_1-(n-1)h\right)d\Ha.\end{eqnarray*}
\end{oldtheorem}
\section{Counterexamples for any $p$ and $j$}
In this section we prove the negative Theorem $\ref{thm-main-counterexamples}$. 
%The following observation about the first intrinsic volume from \cite[Theorem 1.4]{BCPR} is a simple consequence of Minkowski's triangle inequality
%\begin{equation}\label{eq-reverse-for-j=1}
%    V_1((1-\lambda)\cdot K+_p\lambda \cdot L)\leq \left((1-\lambda) V_1(K)^{p}+\lambda V_1(L)^{p}\right)^{1/p},
%\end{equation}
%with equality if and only if $K$ and $L$ are dilates.
The following formula concerning mixed volumes will be useful (see \cite[Theorem 5.3.1]{Sch14} for a more general statement). Let $\theta\in\Sn$, $K_1,\dots,K_{n-1}$ be convex bodies in $\R^n$ and $I$ be a segment parallel to $\theta$. Then,
\begin{equation}\label{eq-mixed-vol-proj}
V(I,K_1,\dots,K_{n-1})=\frac{V_1(I)}{n}V^{\theta^\perp}(K_1|\theta^\perp,\dots,K_{n-1}|\theta^\perp),
\end{equation} where $V^{\theta^\perp}$ denotes the mixed volume functional in the linear space $\theta^\perp$. The next lemma was proved in \cite{BLYZ12}, \cite{Sar15} for $p=0$. 
\begin{lemma}\label{lp-BM-cartesian products}
Let $p\in[0,1]$, $\lambda\in(0,1)$, $E$ be a subspace of $\R^n$, $A,C$ be symmetric convex bodies in $E$ and $B,D$ be symmetric convex bodies in $E^\perp$. Then, 
\begin{equation}\label{lp-BM-cartesian products-eqs}
(1-\lambda)\cdot(A+B)+_p\lambda \cdot(C+D)=((1-\lambda)\cdot A+_p\lambda \cdot C)+((1-\lambda)\cdot B+_p\lambda \cdot D),    
\end{equation}where $(1-\lambda)\cdot A+_p\lambda \cdot C$ and $(1-\lambda)\cdot B+_p\lambda \cdot D$ denote $L^p$-sum in $E$ and $E^\perp$ respectively.
\end{lemma}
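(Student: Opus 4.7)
I would write $K_1 := (1-\lambda)\cdot(A+B)+_p\lambda\cdot(C+D)$ and $K_2 := M+N$ where $M := (1-\lambda)\cdot A+_p\lambda\cdot C \subset E$ and $N := (1-\lambda)\cdot B+_p\lambda\cdot D \subset E^\perp$, and prove both inclusions $K_2\subseteq K_1$ and $K_1\subseteq K_2$. The decisive observation is that since $A\subset E$ and $B\subset E^\perp$ lie in orthogonal subspaces, their Minkowski sum has the decoupled support function
\[
h_{A+B}(x)=h_A(x|E)+h_B(x|E^\perp),\qquad x\in\R^n,
\]
and likewise $h_{C+D}(x)=h_C(x|E)+h_D(x|E^\perp)$. (The symmetry hypothesis ensures each of $A,B,C,D$ contains the origin in its interior, so the function inside each Wulff shape is positive and continuous.)

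For $K_2\subseteq K_1$: writing $K_1=W(f)$ with $f:=((1-\lambda)h_{A+B}^p+\lambda h_{C+D}^p)^{1/p}$ (replaced by $h_{A+B}^{1-\lambda}h_{C+D}^\lambda$ when $p=0$), it suffices to verify $h_{K_2}\le f$ pointwise on $\Sn$. After substituting the decoupled formulas, this reduces to the scalar inequality
\[
\bigl((1-\lambda)(a+b)^p+\lambda(c+d)^p\bigr)^{1/p}\ \ge\ \bigl((1-\lambda)a^p+\lambda c^p\bigr)^{1/p}+\bigl((1-\lambda)b^p+\lambda d^p\bigr)^{1/p}
\]
for $a,b,c,d>0$, which is precisely the reverse Minkowski inequality in weighted $\ell^p$ on two points, valid for $0<p\le 1$; the case $p=0$ is the elementary H\"older inequality $(a+b)^{1-\lambda}(c+d)^\lambda\ge a^{1-\lambda}c^\lambda+b^{1-\lambda}d^\lambda$, and the case $p=1$ is trivial equality.

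For the reverse inclusion $K_1\subseteq K_2$, I would take an arbitrary $y\in K_1=W(f)$, so that by $1$-homogeneity $\langle y,x\rangle\le f(x)$ for every $x\in\R^n$. Restricting $x$ to lie in $E$ kills the $h_B,h_D$ contributions, and the decoupling collapses $f$ on $E$ to exactly the support function of $M$:
\[
f(x)=\bigl((1-\lambda)h_A(x)^p+\lambda h_C(x)^p\bigr)^{1/p}=h_M(x),\qquad x\in E.
\]
Thus $\langle y|E,x\rangle=\langle y,x\rangle\le h_M(x)$ for every $x\in E$, which is exactly the statement $y|E\in M$. The symmetric argument applied to $x\in E^\perp$ gives $y|E^\perp\in N$, and so $y=(y|E)+(y|E^\perp)\in M+N=K_2$. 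There is no serious obstacle here; the only delicate point is the \emph{equality} (not inequality) $f|_E=h_M$ and $f|_{E^\perp}=h_N$, which is what makes the Wulff-shape description powerful enough to recover the reverse inclusion, and which relies crucially on the orthogonality of the subspaces $E$ and $E^\perp$. Both halves work uniformly for all $p\in[0,1]$.
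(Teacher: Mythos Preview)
Your proof is correct and follows essentially the same route as the paper's: both inclusions are established the same way (the reverse Minkowski inequality for $K_2\subseteq K_1$, and restriction of the test direction to $E$ and to $E^\perp$ for $K_1\subseteq K_2$), the only cosmetic difference being that the paper treats $p=0$ by approximation rather than directly. One small caveat: your claimed \emph{equality} $f|_E=h_M$ need not hold in general (the support function of a Wulff shape can be strictly smaller than its defining function), but the argument is unaffected, since $y|E\in M$ follows immediately from $\langle y|E,x\rangle\le f(x)$ for all $x\in E$ together with the very definition of $M$ as the Wulff shape of $f|_E$ in $E$.
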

\begin{proof}
It suffices to prove \eqref{lp-BM-cartesian products-eqs} for $p\in(0,1]$, since the case $p=0$ follows by approximation. Set 
 $$K_\lambda:=(1-\lambda)\cdot(A+B)+_p\lambda \cdot(C+D)\qquad \textnormal{and}\qquad L_\lambda:=((1-\lambda)\cdot A+_p\lambda \cdot C)+((1-\lambda)\cdot B+_p\lambda \cdot D).$$Then,
 \begin{eqnarray*}
K_\lambda&=&\Big\{x+y:(x,y)\in E\times E^\perp,\ \langle x+y,z+w\rangle\\ &&\ \ \leq \left[(1-\lambda)(h_A(z)+h_B(w))^p+\lambda (h_C(z)+h_D(w))^p\right]^{1/p},\ \forall (z,w)\in E\times E^\perp\Big\}.
 \end{eqnarray*}Notice that, since $p\in(0,1]$, for $a,b,c,d>0$, the following inequality (being a simple consequence of Minkowski's triangle inequality) holds
 $$((1-\lambda)(a+b)^p+\lambda (c+d)^p)^{1/p}\geq ((1-\lambda) a^p+\lambda c^p)^{1/p}+((1-\lambda) b^p+\lambda d^p)^{1/p}.$$
 %$$\iff((\frac{1}{\lambda}-1)(a+b)^p+(c+d)^p)^{1/p}\geq ((\frac{1}{\lambda}-1)a^p+c^p)^{1/p}+((\frac{1}{\lambda}-1)b^p+d^p)^{1/p}$$
 %$$\iff\left(\left((\frac{1}{\lambda}-1)^{\frac{1}{p}}a+(\frac{1}{\lambda}-1)^{\frac{1}{p}}b\right)^p+(c+d)^p\right)^{1/p}\geq \left(\left((\frac{1}{\lambda}-1)^{\frac{1}{p}}a\right)^p+c^p\right)^{1/p}+\left(\left((\frac{1}{\lambda}-1)^{\frac{1}{p}}b\right)^p+d^p\right)^{1/p}$$
%$$\iff\left(\left(a'+b'\right)^p+(c+d)^p\right)^{1/p}\geq (a'^p+c^p)^{1/p}+( b'^p+d^p)^{1/p},$$ which is exactly Minkowski's trinagle inequality for $0<p<1$.
Hence,
\begin{eqnarray*}
K_\lambda&\supseteq &\Big\{x+y:(x,y)\in E\times E^\perp,\ \langle x,z\rangle+\langle y,w\rangle\\ &&\ \ \leq [(1-\lambda) h_A^p(z)+\lambda h_C^p(z)]^{1/p}+[(1-\lambda) h_B^p(w)+\lambda h_D^p(w)]^{1/p},\ \forall (z,w)\in E\times E^\perp\Big\}\end{eqnarray*}and, therefore,
 $$K_\lambda\supseteq L_\lambda.$$
To prove the reverse inclusion, let $x\in E$. Then,
$$h_{K_\lambda}(x)\leq\left[(1-\lambda)(h_A(x)+h_B(x))^p+\lambda (h_C(x)+h_D(x))^p\right]^{1/p}
=\left[(1-\lambda) h_A^p(x)+\lambda h_C^p(x)\right]^{1/p},
$$so
$$h_{K_\lambda|E}\leq h_{L_\lambda|E}=h_{(1-\lambda)\cdot A+_p \lambda \cdot C}.$$
In other words, it holds
\begin{equation}\label{eq-lp-BM-cp-inclusion1}
K_\lambda|E\subseteq     (1-\lambda)\cdot A+_p \lambda \cdot C
\end{equation}
and, similarly, one gets
\begin{equation}\label{eq-lp-BM-cp-inclusion2}
K_\lambda|E^\perp\subseteq     (1-\lambda)\cdot B+_p \lambda \cdot D.
\end{equation}
Combining \eqref{eq-lp-BM-cp-inclusion1} and \eqref{eq-lp-BM-cp-inclusion2}, we find
$$K_\lambda\subseteq (K_\lambda|E)+(K_\lambda|E^\perp)\subseteq L_\lambda.$$This completes the proof of the Lemma.\end{proof}
%\begin{theorem}\label{thm-main-counterexamples}
%Let $n\geq 3$, $j\in\{2,\dots,n-1\}$, $p\in[0,1)$ and $\lambda\in(0,1)$. Then, there exist symmetric convex bodies $K$ and $L$ in $\R^n$, such that
%\begin{equation}\label{eq-thm-main-counterexamples}
%V_j((1-\lambda)\cdot K+_p\lambda \cdot L)<V_j(K)^{1-\lambda} V_j(L)^{\lambda }\leq \left((1-\lambda) V_j(K)^{p/j}+\lambda V_j(L)^{p/j}\right)^{j/p}.
%\end{equation}
%\end{theorem}
\begin{proof}[Proof of Theorem \ref{thm-main-counterexamples}.]
We may assume that $p\in(0,1)$.
Fix $\theta\in \Sn$. For $s>0$ and a convex body $A$ in $\theta^\perp$, set $I_s:=[-s\theta,s\theta]$ and $A'_s:=A+I_s$. Using multi-linearity of mixed volume, the fact that it is zero whenever two or more of its arguments are parallel line segments (see Schneider \cite[Theorem 5.1.8]{Sch14}) and finally \eqref{eq-mixed-vol-proj}, we obtain
\begin{eqnarray*}
V_j(A'_s)&=&d_{n,j}V((A+I_s)[j],B_2^n[n-j])\\
&=&d_{n,j}V(A[j],B_2^n[n-j])+jd_{n,j}V(I_s,A[j-1],B_2^n[n-j])\\
&=&d_{n,j}V(A[j],B_2^n[n-j])+jd_{n,j}\frac{V_1(I_s)}{n}V^{\theta^\perp}(A[j-1],(B_2^n|\theta^\perp)[n-j]).
\end{eqnarray*}
Therefore, 
\begin{equation}\label{eq-thm-main-counterexampe-general}
V_j(A'_s)=    d_{n,j}V(A[j],B_2^n[n-j])+s\frac{j2d_{n,j}}{nd_{n-1,j-1}}V_{j-1}(A)=V_j(A)+se_{n,j}V_{j-1}(A),
\end{equation}
where %$M(A):=d_{n,j}V(A[j],B_2^n[n-j])$ and 
$e_{n,j}:=j2d_{n,j}/(nd_{n-1,j-1})$.

For $j\in\{1,\dots,n-1\}$, we will prove the existence of $K$ and $L$ that satisfy \eqref{eq-thm-main-counterexamples}, using induction in the dimension $n$, where $n\geq 2$. By \eqref{eq-reverse-for-j=1} (choosing non-homothetic symmetric convex bodies $K$ and $L$ with $V_1(K)=V_1(L)$), this is trivial if $n=2$. Assume that our claim holds for the positive integer $n-1$, $n\geq 3$ and for any $j\in\{1,\dots,n-2\}$. In the inductive step, fix $1\leq j\leq n-1$. It suffices to find symmetric convex bodies $K'$ and $L'$ in $\R^n$ that satisfy \eqref{eq-thm-main-counterexamples}. In fact, by \eqref{eq-reverse-for-j=1}, it suffices to assume that $j\geq 2$. By the inductive hypothesis, there exist symmetric
convex bodies $K$ and $L$ in $\theta^\perp$ that satisfy  
\begin{equation}\label{eq-V_{j-1}}
V_{j-1}((1-\lambda)\cdot K+_p\lambda \cdot L)<V_{j-1}(K)^{1-\lambda} V_{j-1}(L)^{\lambda }.
\end{equation}
Moreover, by Lemma \ref{lp-BM-cartesian products} and \eqref{eq-thm-main-counterexampe-general}, we obtain
\begin{eqnarray*}
V_j((1-\lambda)\cdot K'_s+_p\lambda \cdot L'_s)&=&V_j(((1-\lambda)\cdot K+_p\lambda \cdot L)'_s)\\
&=&se_{n,j}[V_j((1-\lambda)\cdot K+_p\lambda \cdot L)/(se_{n,j})+V_{j-1}((1-\lambda)\cdot K+_p\lambda \cdot L)].
\end{eqnarray*}
Notice that, since \eqref{eq-V_{j-1}} holds, one can take $s$ to be so large that
$$V_j((1-\lambda)\cdot K+_p\lambda \cdot L)/(se_{n,j})+V_{j-1}((1-\lambda)\cdot K+_p\lambda \cdot L)<$$
$$<\left[V_j(K)/(se_{n,j})+V_{j-1}(K)\right]^{1-\lambda}\left[V_j(L)/(se_{n,j})+V_{j-1}(L)\right]^{\lambda }.$$Hence, if $s$ is large enough, it holds
\begin{eqnarray*}
V_j((1-\lambda)\cdot K'_s+_p\lambda \cdot L'_s)&<&se_{n,j}\left[V_j(K)/(se_{n,j})+V_{j-1}(K)\right]^{1-\lambda}\left[V_j(L)/(se_{n,j})+V_{j-1}(L)\right]^{\lambda }\\
&=&[V_j(K)+se_{n,j}V_{j-1}(K)]^{1-\lambda}[V_j(L)+se_{n,j}V_{j-1}(L)]^{\lambda }\\
&=&V_j(K'_s)^{1-\lambda} V_j(L'_s)^{\lambda },
\end{eqnarray*}
where we used again \eqref{eq-thm-main-counterexampe-general}. The proof of the theorem is complete.\end{proof}
\section{$L_p$-Brunn-Minkowski inequality near the Euclidean ball}
In this section we prove {Theorem }\ref{positive_theorem}. The following simple observation will be useful for what follows.
\begin{lemma}\label{neighborhood_existence}
    Assume $M\in{\cal S}^n$ is such that $V_j(M)=1$ and ${\cal N}$ is an open $C^2$-neighborhood of $M$ in ${\cal S}^n$. Then, there exists an open $C^2$-neighborhood ${\cal N}'$ of $M$ in ${\cal S}^n$ such that, for every $K,L\in {\cal N}'$ and $\lambda\in[0,1]$, the following hold
%\begin{enumerate}[i)]
%\item $(1-\lambda)\cdot K+_p\lambda\cdot L\in N$.
%\item $h_{(1-\lambda)\cdot K+_p\lambda\cdot L}=(1-\lambda)h_K^p+\lambda h_L^p$.
%\item $\frac{1}{V_j(K)^{1/j}}h_K\in N$.
%\end{enumerate}
$$i)\ (1-\lambda)\cdot K+_p\lambda\cdot L\in {\cal N},\qquad ii)\ h_{(1-\lambda)\cdot K+_p\lambda\cdot L}=\left((1-\lambda)h_K^p+\lambda h_L^p\right)^{1/p},\qquad iii)\ \frac{1}{V_j(K)^{1/j}}K\in {\cal N}.$$
\end{lemma}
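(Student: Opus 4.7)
\textbf{Proof proposal for Lemma \ref{neighborhood_existence}.}
The plan is to exploit that the constant map $\lambda\mapsto M$ is the $L_p$-combination of $M$ with itself at every $\lambda$, and that the $L_p$-combination depends continuously on $(K,L,\lambda)$ in the $C^2$-topology, uniformly in $\lambda\in[0,1]$ by compactness. Introduce the candidate function
$$f_{K,L,\lambda}:=\begin{cases}((1-\lambda)h_K^p+\lambda h_L^p)^{1/p},&p\neq 0,\\ h_K^{1-\lambda}h_L^{\lambda},&p=0,\end{cases}$$
on $\Sn$, extended $1$-homogeneously to $\Rn\setminus\{o\}$. The map $(K,L,\lambda)\mapsto f_{K,L,\lambda}$ from ${\cal S}^n\times{\cal S}^n\times[0,1]$ (with the $C^2$-topology on the first two factors) into $C^2(\Sn)$ is continuous; this uses that $h_K$ is bounded away from $0$ in any $C^2$-neighborhood of $M$, which is automatic by symmetry. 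Moreover $f_{M,M,\lambda}=h_M$ for every $\lambda$.

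For (ii) the key observation is that whenever the $1$-homogeneous extension of a function $f\in C^2(\Sn)$ satisfies ${\cal A}f>0$ on $\Sn$, the function $f$ is itself the support function of a body in ${\cal S}^n$, and hence the Wulff shape $W(f)$ has $h_{W(f)}=f$ on $\Sn$. Since ${\cal A}h_M>0$ on the compact sphere, there exists $r>0$ such that $\|f-h_M\|_{C^2}<r$ implies ${\cal A}f>0$ everywhere on $\Sn$, and moreover $W(f)\in{\cal N}$ (shrinking $r$ if necessary to use the openness of ${\cal N}$). By the continuity of $(K,L,\lambda)\mapsto f_{K,L,\lambda}$ together with the compactness of $[0,1]$, there is a $C^2$-neighborhood ${\cal N}_1$ of $M$ in ${\cal S}^n$ such that $\|f_{K,L,\lambda}-h_M\|_{C^2}<r$ for all $K,L\in{\cal N}_1$ and $\lambda\in[0,1]$. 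Then for every $K,L\in{\cal N}_1$ the function $f_{K,L,\lambda}$ is a $C^2$ support function with ${\cal A}>0$, which establishes (ii), and the body $(1-\lambda)\cdot K+_p\lambda\cdot L=W(f_{K,L,\lambda})$ belongs to ${\cal N}$, giving (i).

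For (iii), the functional $V_j$ is continuous on ${\cal S}^n$ in the $C^2$-topology and $V_j(M)=1$, so for $K$ sufficiently close to $M$ the dilation factor $V_j(K)^{-1/j}$ is close to $1$, making $K/V_j(K)^{1/j}$ close to $K$ and hence to $M$ in $C^2$. This yields a second neighborhood ${\cal N}_2$ of $M$ for which (iii) holds, and the final choice ${\cal N}':={\cal N}_1\cap{\cal N}_2$ satisfies all three conditions. There is no serious obstacle: everything reduces to standard continuity in Banach spaces and to the uniformity in $\lambda\in[0,1]$, which is handled by compactness; the only mild care needed is the case $p=0$, covered by the separate definition of $f_{K,L,\lambda}$ above (which is the pointwise limit of the $p\neq 0$ formula and depends continuously on the data in the same way).
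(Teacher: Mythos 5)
Your proposal is correct and takes essentially the same approach as the paper: both arguments rest on the continuity of $(K,L,\lambda)\mapsto{\cal A}\bigl(((1-\lambda)h_K^p+\lambda h_L^p)^{1/p}\bigr)$ in the $C^2$-topology (together with the characterization of elements of ${\cal S}^n$ via strict positivity of ${\cal A}$) and on compactness of $[0,1]$ for uniformity in $\lambda$. The paper states this continuity in sequence form and leaves the remaining details implicit, whereas you spell them out more fully; the substance is identical.
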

\begin{proof}
The proof follows easily from the fact that if $\{K_m\},\{L_m\}$ are sequences from ${\cal S}^n$ that converge to $M$ in $C^2$ norm and $\{\lambda_m\}$ is a sequence from $[0,1]$, then $\left\{{\cal A} \left(\left((1-\lambda_m)h_{K_m}^p+\lambda_m h_{L_m}^p\right)^{1/p}\right)\right\}$ converges uniformly to ${\cal A} h_M$. 
\end{proof}

Following Kolesnikov, Milman \cite{KoM22}, but in the framework of intrinsic volumes, we define a certain second-order differential operator and study its spectrum. First, for $p\neq 0$, we will compute the second derivative of the quantity $pV(h(1+tz)^{1/p}[j],1[n-j])^{p/j}$ at $t=0$, where $h=h_K$ for some $K\in {\cal S}^n$ and $z$ is any $C^2$ function. 
We will see that, in order to prove Theorem \ref{positive_theorem}, it is enough to prove that this second derivative is non-positive, in a $C^2$-neighbourhood of $1$. 

First notice that the multilinearity of mixed volumes and a direct computation yields
\begin{eqnarray}\label{2_deriv_explicit}
    \left(\left(d_{n,j}^{-1}V_j\left(h(1+t z)^\frac{1}{p}\right)\right)^{\frac{p}{j}}\right)''\Big|_{t=0}
    &=&\frac{p-j}{p}V(h[j],1[n-j])^{\frac{p}{j}-2}V(zh,h[j-1],1[n-j])^2\nonumber\\
    &+&\frac{1-p}{p}V(h[j],1[n-j])^{\frac{p}{j}-1}V(z^2h,h[j-1],1[n-j])\nonumber\\
    &+&\frac{j-1}{p}V(h[j],1[n-j])^{\frac{p}{j}-1}V(zh[2],h[j-2],1[n-j]).
\end{eqnarray}
We define the measure $V^j_K$ (being the analogue of the cone-volume measure in the case $j=n-1$) as$$dV_K^j:=\frac{1}{n}hs(h[j-1],1[n-j])d\mathcal{H}^{n-1}.$$Define, also, the inner product
$$L^2(\Sn)\times L^2(\Sn)\ni (z_1,z_2)\mapsto \langle z_1,z_2\rangle:=\langle z_1,z_2\rangle_{L^2(V_K^j)}=\int_{\Sn}z_1z_2dV_K^j$$
and the linear operator
$$T_K^j:C^2(\Sn)\ni z\mapsto z-\frac{s(zh,h[j-2],1[n-j])}{s(h[j-1],1[n-j])}\in C^0(\Sn).$$
Observe that $L^2(V^j_{K_1})=L^2(V^j_{K_2})$, whenever $K_1,K_2\in {\cal S}^n$. Notice also that, trivially, \begin{equation}\label{eq-T_K^j-const=0}T_K^j(c)=0,\end{equation} for any constant $c\in\mathbb{R}$. We wish to rewrite \eqref{2_deriv_explicit} in terms of $V_K^j$, $T_K^j$ and $\langle\cdot ,\cdot\rangle_{L^2(V_K^j)}$. To this end, notice that  

%Clearly, $L^2(V^j_K)$, the space of square-integrable functions w.r.t. $V^j_K$, is a Hilbert space when supplied with 
%$$\langle z_1,z_2\rangle_{L^2(V^j_K)}=\int_{\Sn}z_1z_2dV^j_K.$$
%Furthermore, within a small enough $C^2$-neighborhood $N$ of $B^n_2$, the functions on $\Sn$ that are square-integrable w.r.t. $V^j_K$ are exactly the same for all $K\in N$ and all $j$. 
%This allows us to write simply $z\in L^2$ instead of $z\in L^2(V^j_K)$. Even better, the $L^2(V^j_K)$ and $L^2(V^j_{B^n_2})$ topologies on $L^2$ are equivalent and may be used interchangeably if needed.
%We also define the operator 
%$$\Tilde{T}_K: z \to\frac{S(zh,h[j-%2],1[n-j])}{S(h[j-1],1[n-j])}.$$
%Of course, the operator depends on $j$ but, for reasons of notation convenience, we omit it.
%After that we may write
\begin{equation}\label{linear_mixed_volume}
    V(zh,h[j-1],1[n-j])=\frac{1}{n}\int_{\Sn}zhs(h[j-1],1[n-j])d\mathcal{H}^{n-1}=\int_{\Sn}zdV_K^j
\end{equation}
and, similarly,
\begin{equation}\label{quadratic_mixed_volume}
    V(z^2h,h[j-1],1[n-j])=\int_{\Sn}z^2dV_K^j=\|z\|_{L^2(V_K^j)}^2.
\end{equation}
Finally, one can easily check that for $z_1,z_2\in C^2$, it holds
\begin{equation}\label{eq-T_K^j-symmetric}
\langle z_1,T_K^j z_2\rangle=\langle z_1,z_2\rangle-V(z_1h,z_2h,h[j-2],1[n-j]),    
\end{equation}
hence
\begin{equation}\label{bilinear_mixed_volume}
    V(zh[2],h[j-2],1[n-j])=\|z\|_{L^2(V_j)}^2-\langle z,T_K^jz\rangle .
\end{equation}
Combining \eqref{2_deriv_explicit}, \eqref{linear_mixed_volume}, \eqref{quadratic_mixed_volume} and \eqref{bilinear_mixed_volume}, we arrive at
\begin{eqnarray}\label{eq-2-deriv-1}
\left(\left(d_{n,j}^{-1}V_j\left(h(1+t z)^\frac{1}{p}\right)\right)^{\frac{p}{j}}\right)''|_{t=0}
&=&\frac{\left(d_{n,j}^{-1}V_j(h)\right)^{\frac{p-j}{j}}}{p}\left((j-p)\|z\|^2_{L^2(V_K^j)}-(j-1)\langle z,T_K^jz\rangle\right)\nonumber\\
&&+(p-j)\frac{\left(d_{n,j}^{-1}V_j(h)\right)^{\frac{p-2j}{j}}}{p}\left(\int_{\Sn}zdV_K^j\right)^2.
\end{eqnarray}
On the other hand, by \eqref{eq-2-deriv-1} and using \eqref{eq-T_K^j-const=0}, \eqref{eq-T_K^j-symmetric} and the fact that $\int_{\Sn}dV_K^j=d_{n,j}^{-1}V_j(h)$, we see that for  $c\in\mathbb{R}$, it holds $$\left(\left(d_{n,j}^{-1}V_j\left(h(1+t z)^\frac{1}{p}\right)\right)^{\frac{p}{j}}\right)''\Big|_{t=0}=\left(\left(d_{n,j}^{-1}V_j\left(h(1+t (z+c))^\frac{1}{p}\right)\right)^{\frac{p}{j}}\right)''\Big|_{t=0}.$$
Taking $z+c$ in the place of $z$ in \eqref{eq-2-deriv-1}, with $c=-(n\kappa_n)^{-1}\int_{\Sn}zdV_K^j$ and using the previous identity, we arrive at the following. 
\begin{proposition}\label{equivalent_logBM}
  Given $K\in {\cal S}^n$, $j\in\{2,\dots, n-1\}$, $p\neq 0$ and $z\in C^2$, it holds
   $$\frac{d_{n,j}^{-1}V_j(h_K)^{\frac{j-p}{j}}}{j-1}p\left(\big(V_j(h_K(1+t z)^\frac{1}{p})\big)^{\frac{p}{j}}\right)''\Big|_{t=0}=-\left(\langle z_0,T_K^jz_0\rangle_{L^2(V_K^j)}-\frac{j-p}{j-1}\|z_0\|^2_{L^2(V_K^j)}\right),$$
    where $z_0:=z-d_{n,j}V_j(K)^{-1}\int_{\Sn}zdV_K^j$ is the orthogonal projection of $z$ onto the subspace of $C^2$ functions which are orthogonal to the constants (with respect to the inner product $\langle \cdot, \cdot\rangle_{L^2(V_K^j)}$).
\end{proposition}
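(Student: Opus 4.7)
The plan is a direct computation: Taylor-expand $V_j(h_K(1+tz)^{1/p})^{p/j}$ to second order in $t$ via multilinearity of mixed volumes, recast each resulting integral in terms of $\langle \cdot, \cdot\rangle_{L^2(V_K^j)}$ and the operator $T_K^j$, and finally exploit the invariance of the second derivative under $z \mapsto z + c$ to eliminate the $\bigl(\int z\, dV_K^j\bigr)^2$ contribution.

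Concretely, setting $u(t) := h_K(1+tz)^{1/p}$ and $F(t) := V(u(t)[j], 1[n-j])$, multilinearity and symmetry of mixed volumes give
$$F''(0) = j\, V(u''(0), h_K[j-1], 1[n-j]) + j(j-1)\, V(u'(0), u'(0), h_K[j-2], 1[n-j]),$$
while $u'(0) = h_K z / p$ and $u''(0) = h_K z^2 (1-p)/p^2$. Applying the chain rule to $G(t) := F(t)^{p/j}$ contributes an additional term $(p/j)(p/j-1)\, F(0)^{p/j-2}\, F'(0)^2$ involving $F'(0) = (j/p)\, V(zh_K, h_K[j-1], 1[n-j])$. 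Collecting constants produces exactly \eqref{eq-2-deriv-1}. Rewriting each mixed volume via \eqref{linear_mixed_volume}, \eqref{quadratic_mixed_volume}, and \eqref{bilinear_mixed_volume} (the last being a consequence of \eqref{eq-T_K^j-symmetric}) converts $G''(0)$ into an expression involving only $\|z\|^2_{L^2(V_K^j)}$, $\langle z, T_K^j z\rangle$, and $\bigl(\int z\, dV_K^j\bigr)^2$.

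The final step is to absorb $\int z\, dV_K^j$ by replacing $z$ with its orthogonal projection $z_0$ onto the complement of constants. This is legitimate because $G''(0)$ is invariant under $z \mapsto z + c$: factoring $h_K(1+t(z+c))^{1/p} = (1+tc)^{1/p}\, h_K(1 + (t/(1+tc))\, z)^{1/p}$ and using the $j$-homogeneity of $V_j$ yields
$$V_j(h_K(1+t(z+c))^{1/p})^{p/j} = (1+tc)\, V_j(h_K(1 + (t/(1+tc))\, z)^{1/p})^{p/j},$$
and a short calculation shows that the second derivative of the right-hand side at $t = 0$ coincides with that of $V_j(h_K(1+tz)^{1/p})^{p/j}$; alternatively, invariance can be verified algebraically using $T_K^j c = 0$ from \eqref{eq-T_K^j-const=0} together with $\int dV_K^j = d_{n,j}^{-1} V_j(h_K)$. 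Choosing $c = -d_{n,j}\, V_j(K)^{-1}\int z\, dV_K^j$ replaces $z$ by $z_0$ and kills the $\bigl(\int z\, dV_K^j\bigr)^2$ term, and multiplying through by the prefactor $d_{n,j}^{-1} V_j(h_K)^{(j-p)/j}\, p / (j-1)$ yields the claim.

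The only real obstacle is bookkeeping: tracking the constants $p$, $j$, $d_{n,j}$, $V_j(h_K)$ through both the Taylor expansion of $(1+tz)^{1/p}$ and the chain rule for $F \mapsto F^{p/j}$. There is no conceptual difficulty once the three mixed-volume identities and the shift invariance are in hand.
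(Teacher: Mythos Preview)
Your proposal is correct and follows essentially the same approach as the paper: compute the second derivative via multilinearity to obtain \eqref{2_deriv_explicit}, rewrite using \eqref{linear_mixed_volume}--\eqref{bilinear_mixed_volume} to reach \eqref{eq-2-deriv-1}, and then use invariance under $z\mapsto z+c$ to pass to $z_0$. The one small addition you offer is the homogeneity-based factoring argument for the shift invariance, which is a clean alternative; the paper instead verifies invariance purely algebraically from \eqref{eq-T_K^j-const=0}, \eqref{eq-T_K^j-symmetric}, and $\int dV_K^j=d_{n,j}^{-1}V_j(h_K)$, which is the second option you mention.
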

%\begin{proposition}\label{equivalent_logBM}
%  Given $K\in \KK^2_+$, $j=2,3,\dots, n-1$ and $z\in C^2$, 
%    \begin{equation}
%         \frac{d^2}{(d\epsilon)^2}\log V_j(h_Ke^{\epsilon z})|_{\epsilon=0}=\frac{j}{V(h_K[j],1[n-j])}\left(j\int_{\Sn}z_0^2dV_K^j-(j-1)\int_{\Sn}z_0T_Kz_0dV_K^j\right)
%     \end{equation}
%    where $z_0$ is the orthogonal projection of $z$ onto $1^{\perp_{L^2(V^j_K)}}$.
%\end{proposition}
Before we proceed with the proof of Theorem \ref{positive_theorem}, let us discuss some properties of the operator $T_K^j$. 
Fix $z\in C^2$. 
First of all, it follows immediately by \eqref{eq-T_K^j-symmetric} that $T_K^j$ is symmetric {with respect to $\langle\cdot,\cdot\rangle_{L^2(V^j_K)}$}.
Moreover, the classical Brunn-Minkowski inequality (case $p=1$) for intrinsic volumes ensures that the function $V_j(h_K+tzh_K)^{1/j}$ is concave in $t$, for $t$ sufficiently close to 0. 
In particular, it holds $\left(V_j(h_K(1+tz))^{1/j}\right)''|_{t=0}\leq 0$. 
Thus, by Proposition \ref{equivalent_logBM}, we conclude that 
$$\langle z-c,T_K^j(z-c)\rangle \geq 0,$$where $c=d_{n,j}V_j(K)^{-1}\int_{\Sn}zdV_K^j$. However, by  \eqref{eq-T_K^j-const=0} and the symmetry of $T_K^j$, it follows that $\langle z-c,T_K^j(z-c)\rangle=\langle z,T_K^jz\rangle$. This shows that $T_K^j$ is positive semi-definite. Thus (since it is also symmetric, see e.g. Davies \cite[Theorem 4.4.5]{Dav}), the operator has a self-adjoint extension {(with respect to $\langle \cdot,\cdot\rangle_{L^2(V_K^j)}$)} {in $L^2(V_K^j)$} , which we still denote by $T_K^j$.
Notice, also, that for $z\in C^2$, we have  
\begin{equation*}%\label{eq-T_1=-Delta}
T^j_{B^n_2}z=z-\frac{s(z,1[n-2])}{s(1[n-1])}=z-D_{n-1}({\cal A}z,Id[n-2])=-\frac{1}{n-1}\Delta_{\Sn}z.
%&=&z-\sum_{i,k=1}^{n-1}(D^2z)_{ik}Q^{i,k}(Id[n-2])\\
%&=&z-\sum_{i,k=1}^{n-1}(D^2z)_{ik}\frac{(-1)^{i+k}}{n-1}det(M^{i,k}(Id))=-\frac{1}{n-1}\Delta_{\Sn} z,
\end{equation*}
%hence \begin{equation}\label{eq-T_1=-Delta}
%T_{B_2^n}^j=-\frac{1}{n-1}\Delta_{\Sn}.    
%\end{equation}
In particular, $-T^j_{B_2^n}$ is uniformly elliptic and a continuity argument shows that there exists $\delta_0=\delta_0(n,j)>0$, such $-T^j_K$ is uniformly elliptic, whenever $K\in {\cal S}^n$ satisfies $\|h_K-1\|_{C^2}<\delta_0$. Consequently, if $K$ is in a $C^2$-neighbourhood of $B_2^n$, $-T_K^j$ is elliptic and self-adjoint. Thus (see e.g. Gilbarg, Trudinger \cite[Section 8.12]{GiTr}), the spectrum of $T_K^j$ is discrete and its eigenfunctions form an orthonormal basis of $L^2$.
Moreover, the sequence of its (necessarily non-negative) eigenvalues, placed in non-decreasing order according to multiplicity, tends to infinity. 

Next, observe that $T^j_K$ maps even functions to even functions and odd functions to odd functions, while even and odd functions are mutually orthogonal with respect to  $\langle \cdot,\cdot\rangle_{L^2(V_K^j)}$. Thus, if $z$ is an eigenfunction of $T_K^j$, then its even part is also an eigenfunction, as long as $z$ is not odd. Since there is no orthonormal basis of $L^2$ consisting of odd functions and the constant 1, there exists an even eigenfunction of $T_K^j$ which is also orthogonal to 1. As in \cite{KoM22}, we denote by $\lambda_{1,e}(T_K^j)$ the smallest eigenvalue of $T_K^j$ that corresponds to even eigenfunctions which are orthogonal (with respect to $\langle \cdot,\cdot\rangle_{L^2(V_K^j)}$) to 1. Then, the well known variational characterization of eigenvalues yields
\begin{equation}\label{eq-Rayleigh quotient}
\lambda_{1,e}(T_K^j)=\min\left\{\langle z,T_K^jz\rangle_{L^2(V^j_K)}:{z\textnormal{ even},~z\perp1,~\|z\|_{L^2(V_K^j)}=1}\right\},
\end{equation}
if $\|h_K-1\|_{C^2}<\delta_0$.

\begin{lemma}\label{lemma-main}
 Let $j\in\{2,\dots,n\}$, $p\in(\frac{2n-j-nj}{n-1},1)$. Then, there exists $\delta_1>0$, that depends only on $n,p$, such that for every $K\in{\cal S}^n$ with $\|h_K-1\|_{C^2}<\delta_1$ and for every even $z\in C^2$ with $\int_{\Sn}zdV_K^j=0$, it holds
$$\langle z,T_K^jz\rangle_{L^2(V_K^j)}> \frac{j-p}{j-1}\|z\|^2_{L^2(V_K^j)}.$$
\end{lemma}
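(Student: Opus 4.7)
The plan is to reduce the lemma to a spectral comparison for the operator $T_K^j$ and then apply a continuity/perturbation argument about the Euclidean ball. For $\|h_K-1\|_{C^2}<\delta_0$, the preceding discussion in the excerpt tells us that $-T_K^j$ is uniformly elliptic and self-adjoint with discrete spectrum, so the variational characterization \eqref{eq-Rayleigh quotient} reduces the desired estimate to proving a strict inequality $\lambda_{1,e}(T_K^j) > (j-p)/(j-1)$ for $h_K$ in a small enough $C^2$-neighborhood of $1$.

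I would first handle the ball case explicitly. The operator $T_{B_2^n}^j = -\frac{1}{n-1}\Delta_{\Sn}$ has eigenvalues $k(k+n-2)/(n-1)$ for $k=0,1,2,\dots$, with the $k$-th eigenspace spanned by spherical harmonics of degree $k$. Even eigenfunctions orthogonal to constants force even $k\geq 2$, so the minimum is attained at $k=2$ and $\lambda_{1,e}(T_{B_2^n}^j) = 2n/(n-1)$. A direct cross-multiplication (by the positive quantity $(n-1)(j-1)$) shows that $p > (2n-j-nj)/(n-1)$ is equivalent to $2n/(n-1) > (j-p)/(j-1)$, so the quantity $2\eta := 2n/(n-1) - (j-p)/(j-1)$ is strictly positive and provides the required spectral gap at the ball.

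The final step is to show that $\lambda_{1,e}(T_K^j) \geq \lambda_{1,e}(T_{B_2^n}^j) - \eta$ as long as $\|h_K-1\|_{C^2}<\delta_1$ for a suitable $\delta_1\leq\delta_0$ depending only on $n,p$. From the explicit formula $T_K^j z = z - s(zh_K,h_K[j-2],1[n-j])/s(h_K[j-1],1[n-j])$ and the definition of mixed discriminants, the second-order coefficients and zeroth-order term of $-T_K^j$ depend continuously on $h_K$ in the $C^2$-topology (with convergence in $C^0$), and likewise the density of $V_K^j$ with respect to $\Ha$ depends continuously on $h_K$ in $C^2$. Combined with the uniform ellipticity guaranteed by the excerpt, classical spectral perturbation theory for self-adjoint uniformly elliptic operators on the compact manifold $\Sn$ (e.g., via the Courant-Fischer min-max formula applied to the finite-dimensional test subspace of spherical harmonics of degree $\leq 2$) delivers continuity of $\lambda_{1,e}(T_K^j)$ at $K=B_2^n$ and hence the claimed inequality.

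The main obstacle is that the Hilbert space $L^2(V_K^j)$, together with the orthogonality constraint defining $\lambda_{1,e}$, genuinely varies with $K$. I would handle this by reexpressing the Rayleigh quotient with respect to the fixed measure $\Ha$, using that the density $dV_K^j/d\Ha$ is bounded above and below uniformly in $K$ near $B_2^n$, and by observing that the $L^2(V_K^j)$-projection of any fixed (degree-$2$) spherical harmonic onto the constants converges to its $L^2(V_{B_2^n}^j)$-projection as $h_K\to 1$ in $C^2$. Once these uniformities are in place, the min-max principle yields $\lambda_{1,e}(T_K^j)\to \lambda_{1,e}(T_{B_2^n}^j)$ as $h_K\to 1$ in $C^2$, from which the strict inequality and the lemma follow.
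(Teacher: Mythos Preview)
Your proposal is correct and follows essentially the same approach as the paper: reduce via \eqref{eq-Rayleigh quotient} to the inequality $\lambda_{1,e}(T_K^j)>(j-p)/(j-1)$, compute $\lambda_{1,e}(T_{B_2^n}^j)=2n/(n-1)$ from the spectrum of $-\Delta_{\Sn}$, and then use stability of this eigenvalue as $h_K\to 1$ in $C^2$. The only difference is in how the stability step is implemented---the paper argues by contradiction, extracting a weakly convergent sequence of eigenfunctions for a hypothetical sequence $K_m\to B_2^n$ with $\lambda_{1,e}(T^j_{K_m})\le (j-p)/(j-1)$ and showing that the limit would be an even eigenfunction of $-\tfrac{1}{n-1}\Delta_{\Sn}$ orthogonal to constants with eigenvalue at most $(j-p)/(j-1)$---whereas you invoke spectral perturbation/continuity directly; these are equivalent standard maneuvers.
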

\begin{proof}
Due to \eqref{eq-Rayleigh quotient}, it suffices to prove that there exists $0<\delta_1\leq\delta_0$, such that for $K\in {\cal S}^n$ with $\|h_K-1\|_{C^2}<\delta_1$, it holds $\lambda_{1,e}(T^j_K)>(j-p)/(j-1)$. Assume that such $\delta_1$ does not exist. Then, there will be a sequence $\{K_m\}_{m\in\mathbb N}$ from ${\cal S}^n$ converging to $B^n_2$ in the $C^2$ norm, such that for every $m\in\mathbb N$, it holds 
 $$0\leq\lambda_m:=\lambda_{1,e}(T^j_{K_m})\leq\frac{j-p}{j-1}.$$
For $m\in\mathbb N$, let $z_m\in L^2$ with $\|z_m\|_{L^2(V^j_{B^n_2})}=1$ be an even eigenfunction of $T^j_{K_m}$ that corresponds to $\lambda_m$, that is $ T^j_{K_m}z_m=\lambda_mz_m$ and $\int_{\Sn}z_mdV^j_{K_m}=0$.
%\begin{equation}
 %  \label{eigs}
 %  T^j_{K_m}z_m=\lambda_mz_m.  
%\end{equation}
%and
%$$z_m\perp_{L^2(V^j_{K_m})}1,$$
%the latter condition being equivalent to
%$$\int_{\Sn}z_mdV^j_{K_m}=0$$
Since $\{z_m\}$ is a bounded sequence from $L^2$, by taking subsequences, we may assume that $\{z_m\}$ converges weakly (with respect to $L^2(V^j_{B^n_2})$) to some even $z\in L^2$ satisfying $\|z\|_{L^2(V^j_{B^n_2})}=1$ and $\{\lambda_m\}$ converges to some $\lambda\in[0,\frac{j-p}{j-1}]$.
The reader can check that $\lambda$ is an eigenvalue of $T^j_{B^n_2}=-\frac{1}{n-1}\Delta_{\Sn}$, corresponding to $z$ and the latter is even with $\int_{\Sn}zdV^j_{B^n_2}=0$.
But it is well-known (see Chavel \cite[p. 35-36]{Cha} or Taylor \cite[Corollary 4.3]{Tay}) that $\lambda_{1,e}(-\Delta_{\Sn})=2n$, so $\lambda_{1,e}(T^j_{B^n_2})=\frac{2n}{n-1}$. This implies
$$\lambda\geq\frac{2n}{n-1}=\frac{j-\frac{2n-j-nj}{n-1}}{j-1}>\frac{j-p}{j-1},$$ a contradiction. 
\end{proof}

\begin{proof}[Proof of Theorem \ref{positive_theorem}.]
First we treat the case $p\neq 0$.
Because of Proposition \ref{equivalent_logBM} and Lemma \ref{lemma-main}, 
%and \eqref{eq-Rayleigh quotient}, 
there exists
 $\delta_1>0$ that depends only on $n,p$, such that for every $M\in{\cal S}^n$ satisfying $\|h_M-1\|_{C^2}<\delta_1$ and for every even $z \in C^2$, it holds 
\begin{equation}\label{2_deriv_negative_near_ball}
  p\left(V_j(h_M(1+t z)^\frac{1}{p})^{\frac{p}{j}}\right)''|_{t=0}\leq0,
\end{equation}with equality if and only if $z$ is constant.
%\begin{equation}\label{2_deriv_negative_near_ball}
%    \frac{d^2}{(d\epsilon)^2}\log V_j(h_Me^{\epsilon z})|_{\epsilon=0}\leq0
%\end{equation}
According to Lemma \ref{neighborhood_existence}, there is a $\delta'>0$ (that depends only on $\delta_1$, hence only on $n,p$) such that for every $K,L\in {\cal S}^n$ satisfying $\|h_K-1\|_{C^2}<\delta'$ and $\|h_L-1\|_{C^2}<\delta'$, and for every $\lambda\in[0,1]$, we have $ (1-\lambda)\cdot K+_p\lambda\cdot L\in{\cal S}^n$, $ \|h_{(1-\lambda)\cdot K+_p\lambda\cdot L}-1\|_{C^2}<\delta_1$ and $h_{(1-\lambda)\cdot K+_p\lambda\cdot L}=((1-\lambda)h_K^p+\lambda h_L^p)^{\frac{1}{p}}$.
%$$h_{(1-\lambda)\cdot K+_0\lambda\cdot L}=(1-\lambda)\cdot h_K+_0\lambda\cdot h_L=h_K^{1-\lambda}h_L^\lambda.$$
So, for $\lambda_0\in(0,1)$, taking $M=(1-\lambda_0)\cdot K+_p\lambda_0\cdot L$, we find
\begin{eqnarray}\label{equality_investigation}
    p\left(V_j\big((1-\lambda)\cdot K+_p\lambda\cdot L\big)^{\frac{p}{j}}\right)''\big|_{\lambda=\lambda_0}\nonumber
    &=&p\left(V_j\big(((1-\lambda)h_K^p+\lambda h_L^p)^{\frac{1}{p}}\big)^{\frac{p}{j}}\right)''\Big|_{\lambda=\lambda_0}\nonumber\\
    &=&p\left(V_j\left(\big((1-\lambda_0)h_K^p+\lambda_0h_L^p-\lambda h_K^p+\lambda h_L^p\big)^{\frac{1}{p}}\right)^{\frac{p}{j}}\right)''\Big|_{\lambda=0}\nonumber\\
   % &=&\left(V_j\big((h_M^p+\lambda(h_L^p-h_K^p))^{\frac{1}{p}}\big)^{\frac{p}{j}}\right)''\Big|_{\lambda=0}\nonumber\\
    &=&p\left(V_j\left(h_M\left(1+\lambda\frac{h_L^p-h_K^p}{h_M^p}\right)^{\frac{1}{p}}\right)^{\frac{p}{j}}\right)''\Bigg|_{\lambda=0}\leq0,
\end{eqnarray}
%\begin{eqnarray}\label{equality_investigation}
%    (\log V_j((1-\lambda)\cdot K+_0\lambda\cdot L))''|_{\lambda=\lambda_0}\nonumber%&=&(\log V_j((1-(\lambda+\lambda_0))\cdot h_K+_0(\lambda+\lambda_0)\cdot h_L))''|_{\lambda=0}
    %\\&=&(\log V_j(h_K^{1-(\lambda+\lambda_0)}h_L^{\lambda+\lambda_0}))''|_{\lambda=0}
%    &=&(\log V_j(((1-\lambda_0)\cdot h_K+_0\lambda_0\cdot h_L)\big(\frac{h_L}{h_K}\big)^{\lambda}))''|_{\lambda=0}\nonumber\\
%    &=&(\log V_j(h_{(1-\lambda_0)\cdot K+_0\lambda_0\cdot L}\big(\frac{h_L}{h_K}\big)^{\lambda}))''|_{\lambda=0}\leq0  
%\end{eqnarray}
as we can see by setting 
%$M=(1-\lambda_0)\cdot K+_0\lambda_0\cdot L$, $z=\log\frac{h_L}{h_K}$ and $\lambda=\epsilon$
$z=\frac{h_L^p-h_K^p}{h_M^p}$, $\lambda=t$
and applying \eqref{2_deriv_negative_near_ball}. 
%(such $f$ sutisfies the assumption $\|f-1\|_{C^2}\leq\frac{1+diam(N)}{1-diam(N)}$)
This shows that the function $\phi$ defined in $[0,1]$ as
$\phi(\lambda):=pV_j((1-\lambda)\cdot K+_p\lambda\cdot L)^{\frac{p}{j}}$ is concave, which implies \eqref{L_p-BM_intrinsic}. {Now assume that equality holds in \eqref{L_p-BM_intrinsic} for some $\lambda\in(0,1)$. Then $\phi(\lambda)=(1-\lambda)\phi(0)+\lambda\phi(1)$. Since $\phi$ is concave, this implies that $\phi$ is affine on $[0,1]$. Therefore $\phi''=0$ everywhere in $[0,1]$.
Thus, equality holds in \eqref{equality_investigation} for all $\lambda_0\in[0,1]$. But then, by the equality cases in \eqref{2_deriv_negative_near_ball}, $z:=(h_L^p-h_K^p)/h_M^p$ is constant, which proves that $K,L$ are dilates.} 

We are left with the case $p=0$. This will follow from the case $p<0$. Set {$$p_0:=\frac{(2n-j-nj)/(n-1)}{2}<0$$}(say) and let $\delta>0$ be the constant that corresponds to the case $p=p_0$ in Theorem \ref{positive_theorem}. According to Lemma  \ref{neighborhood_existence} (iii), there exists $\widetilde{\delta}>0$, such that if $K,L\in{\cal S}^n$ be such that $\|h_K-1\|_{C^2},\|h_L-1\|_{C^2}<\widetilde{\delta}$, then $\|h_{\overline{K}}-1\|_{C^2},\|h_{\overline{L}}-1\|_{C^2}<\delta$, where $\overline{K}:=K/V_j(K)^{1/j}$ and $\overline{L}:=L/V_j(L)^{1/j}$. But then we know that for $\lambda\in(0,1)$, it holds
$$V_j((1-\lambda)\cdot \overline{K}+_{p_0}\lambda\cdot\overline{L})\geq \left((1-\lambda)V_j(\overline{K})^{p_0/j}+\lambda V_j(\overline{L})^{p_0/j}\right)^{j/p_0}=1.$$
The assertion follows from \eqref{eq-inclusion} (with $p=p_0$, $q=0$ and with equality if and only if $\overline{K}=\overline{L}$) and from the fact that \eqref{eq-log-BM-cms} is invariant under independent dilations of $K$ and $L$.
\end{proof}
\section{A uniqueness result for the $L_p$-Christoffel-Minkowski problem}
The aim of this section is to prove the uniqueness result Theorem \ref{uniqueness_Christoffel_Minkowski}.
The first ingredient for its proof is an $L^\infty$ estimate for $\log h_K$, where $K$ satisfies the assumptions of Theorem \ref{uniqueness_Christoffel_Minkowski}. A similar result appears in \cite{Gu21} for $0<p<1$. We would like, however, to emphasize the fact that the constant in our estimate depends only in the $L^\infty$ norm of $\log g$, while the estimate in \cite{Gu21} depends on its $C^1$ norm.
\begin{theorem}\label{lower_bound_sup_fun}
    Fix $p\in[0,1)$ and $j\in\{1,\dots,n-2\}$, such that $(p,j)\not=(0,1)$. Assume that $K$ is a symmetric convex body satisfying \eqref{eq-L_pcms} for some {non-negative} even measurable function $g$, with $\|\log g\|_{L^\infty}<\infty$. Then there exists a constant $C>1$ that depends only on $n,p, \|\log g\|_{L^\infty}$, such that 
    \begin{equation*}
       C^{-1}<h_K(x)<C, \qquad\forall x\in \Sn.
    \end{equation*}In fact, the upper bound holds without the restriction $(p,j)\neq (0,1)$.
\end{theorem}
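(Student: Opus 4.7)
The plan is to prove the upper bound $h_K<C$ (valid for all $(p,j)$) and the lower bound $h_K>C^{-1}$ (requiring $(p,j)\neq(0,1)$) by substantially different methods. I describe the main case $p\in(0,1)$; the endpoint $p=0$ requires a slightly modified treatment of the integral $\int h_K^{p-1}\,d\Ha$.

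For the upper bound the key observations are: (i) the support function $h_K$ is $R$-Lipschitz on $\Sn$ with $R:=\max h_K$, since $|Dh_K(u)|$ equals the distance from the origin to the boundary point of $K$ with outer normal $u$, bounded by $R$; (ii) by symmetry of $K$, if $y_0\in\partial K$ realises $|y_0|=R$, then $h_K(u)\geq|\langle u,y_0\rangle|$ for every $u\in\Sn$. Integrating the equation $h_K^{1-p}s_j(K,\cdot)=g$ against $1$ and against $h_K$, using $\int s_j\,d\Ha=nV_j(K)/d_{n,j}$, $\int h_Ks_j\,d\Ha=nV_{j+1}(K)/d_{n,j+1}$, the uniform bounds $e^{-M}\leq g\leq e^M$ with $M:=\|\log g\|_{L^\infty}$, the obvious estimate $h_K\leq R$, and the comparison (ii) (which yields $\int h_K^{p-1}\,d\Ha\leq C R^{p-1}$ by an explicit spherical integration finite for $p\in(0,1)$), one obtains the two-sided estimates
\[V_j(K)\asymp R^{p-1},\qquad V_{j+1}(K)\asymp R^{p},\]
with implicit constants depending only on $n,p,j,M$. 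Now Alexandrov's isoperimetric inequality, i.e.\ that $(V_k(K)/V_k(B_2^n))^{1/k}$ is non-increasing in $k$, gives in particular $V_{j+1}(K)\leq c_{n,j}V_j(K)^{(j+1)/j}$; substituting the two-sided bounds one finds $R^p\leq c\,R^{(p-1)(j+1)/j}$, which rearranges to $R^{(j+1-p)/j}\leq C$, and hence $R\leq C'$ since $j+1-p>0$.

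For the lower bound I argue by contradiction via Blaschke compactness. If the lower bound failed, there would exist a sequence $(K_m)\subset{\cal S}^n$ satisfying $h_{K_m}^{1-p}s_j(K_m,\cdot)=g_m$ with $\|\log g_m\|_{L^\infty}\leq M$ and $\min h_{K_m}\to 0$. The upper bound just established makes $(K_m)$ uniformly bounded, so along a subsequence $K_m\to K_\infty$ in the Hausdorff metric; the limit is a symmetric convex body with $\min h_{K_\infty}=0$, hence contained in a proper linear subspace $E\subsetneq\R^n$ of some dimension $k<n$. Weak continuity of mixed area measures $S_j(K_m,\cdot)\to S_j(K_\infty,\cdot)$ together with extraction of a weak-$\ast$ limit $\mu$ of $g_m\,d\Ha$ (still satisfying $e^{-M}\,d\Ha\leq\mu\leq e^M\,d\Ha$) gives the limiting identity
\[h_{K_\infty}(u)^{1-p}\,dS_j(K_\infty,u)=d\mu(u).\]

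The heart of the argument is the structural analysis of $S_j(K_\infty,\cdot)$ for the degenerate limit. For a $k$-dimensional symmetric body $K_\infty\subset E$, either $k<j$, in which case $S_j(K_\infty,\cdot)\equiv 0$, or $k\geq j$, in which case $S_j(K_\infty,\cdot)$ admits an explicit decomposition into singular pieces supported on proper subspheres of $\Sn$ determined by the boundary stratification of $K_\infty$ inside $E$. Since $h_{K_\infty}^{1-p}$ vanishes only on the subsphere $\Sn\cap E^\perp$, the left hand side of the limit equation is either identically zero or singular with respect to $\Ha$, contradicting the absolute continuity and strict positivity of $\mu$. The endpoint $(p,j)=(0,1)$ is genuinely excluded because there the equation reduces to the linear elliptic identity $\Delta_{\Sn}h_K=(n-1)(g-1)h_K$, whose degenerate weak solutions include support functions of symmetric segments; in that endpoint the vanishing rate of $h^{1-p}=h$ exactly matches the concentration rate of $s_1$ on the singular set, and the compactness argument fails to produce a contradiction. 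The hardest step and main technical obstacle of this plan is the quantitative bookkeeping of the singular decomposition of $S_j(K_\infty,\cdot)$ against the precise vanishing order of $h_{K_\infty}^{1-p}$ near $\Sn\cap E^\perp$, verifying that mutual cancellation of these two rates occurs only in the endpoint case.
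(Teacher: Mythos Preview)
Your upper bound is essentially the paper's argument (integrate the equation against $1$ and $h_K$, compare $V_j$ and $V_{j+1}$ with powers of $R$, apply Aleksandrov--Fenchel); your direct use of $h_K(u)\geq R\,|\langle u,y_0/R\rangle|$ to bound $\int h_K^{p-1}\,d\Ha$ is in fact a little cleaner for $p\in(0,1)$ than the paper's John-ellipsoid route.

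The lower bound, however, rests on a false structural premise. For a degenerate limit $K_\infty\subset N^\perp$ and $j\leq n-2$, the area measure $S_j(K_\infty,\cdot)$ is \emph{not} singular with respect to $\Ha$ on $\Sn$. The paper's Proposition~5.3 (and the closing Example) give, for $u\neq\pm N$,
\[
s_j(K_\infty,u)=\frac{1-\tfrac{j}{n-1}}{\langle u,\tilde{u}\rangle^{\,j}}\,s_j^{N^\perp}(K_\infty,\tilde{u}),
\]
a genuinely positive $L^1$ density whenever $s_j^{N^\perp}(K_\infty,\cdot)$ is not a.e.\ zero (and if it were a.e.\ zero, then so would $g$ be). So the left side of your limit equation is absolutely continuous, and your proposed contradiction via singularity evaporates. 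The paper's actual contradiction combines the formula above with $h_{K_\infty}(u)=\langle u,\tilde{u}\rangle\, h_{K_\infty}(\tilde{u})$ to obtain
\[
g(u)=\Bigl(1-\tfrac{j}{n-1}\Bigr)\,\langle u,\tilde{u}\rangle^{\,1-p-j}\,h_{K_\infty}(\tilde{u})^{1-p}\,s_j^{N^\perp}(K_\infty,\tilde{u});
\]
since $1-p-j<0$ precisely when $(p,j)\neq(0,1)$, the density $g$ blows up as $u\to\pm N$ on a set of positive measure, contradicting $\|\log g\|_{L^\infty}<\infty$. Your intuition that the rates cancel only at the endpoint is right, but the mechanism is an \emph{unbounded density}, not a \emph{singular measure}. (A minor side point: your claimed linear reduction at $(p,j)=(0,1)$ is also off, since $h\,s_1=g$ reads $h\bigl(\Delta_{\Sn}h+(n-1)h\bigr)=(n-1)g$, which is quadratic in $h$.)
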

%\begin{remark}\label{rem-after thm 5.1}
\begin{remark}
As will be clear from the proof, a version of Theorem \ref{uniqueness_Christoffel_Minkowski} also holds for $p=1$; in fact, due to the invariance of $s_j(K,\cdot)$ under translations, the symmetry assumption in this case can be dropped: Let $K$ be a convex body in $\R^n$, satisfying $s_j(K,\cdot)=g$ in the sense of \eqref{eq-L_pcms}, for some $j\in\{1,\dots,n-2\}$ and for some measurable function $g:\Sn\to(0,\infty)$ with $\|\log g\|_{L^\infty}<\infty$. Then, the inradius $r$ and the outradius $R$ of $K$ satisfy $C^{-1}<r\leq R<C,$
for some constant $C>1$ that depends only on $n$ and $\|\log g\|_{L^\infty}$. 
%\end{remark}
\end{remark}

The lower bound will not be needed for the proof of Theorem \ref{uniqueness_Christoffel_Minkowski}. However, in our opinion, it is of some interest, as one can prove regularity results in the spirit of \cite{Gu21}, \cite{GuM03}, \cite{GuX18} that include the case $p=0$, which, to the best of our knowledge, are new. For instance, the maximum principle (applied similarly as in \cite[Proposition 4.2]{GuX18}) along with \cite[Theorem 3.3]{GuM03} yield easily the following:
\begin{corollary}
Fix  $0<\alpha< 1$, $j\neq1$, let $g\in C^2(\Sn)$ be an even positive function and let $p=0$. Then, there exists a positive constant $C$ depending only on $n$, $\alpha$, $\min g$ and $\|g\|_{C^2}$ such that if $h$ is an even classical (convex) $C^4$ solution of \eqref{eq-L_pcms}, then $\|h\|_{C^{2,\alpha}}\leq C$.
\end{corollary}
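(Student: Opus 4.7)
The plan is a standard $C^0 \to C^1 \to C^2 \to C^{2,\alpha}$ chain of a priori estimates, the first and last links being immediate consequences of results already cited, with the $C^2$ step being the only real content.

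For the $C^0$ bound, I would apply Theorem \ref{lower_bound_sup_fun} with $p=0$ (admissible because $j\neq 1$). Since $g\in C^2(\Sn)$ is positive and even, $\|\log g\|_{L^\infty}$ is controlled by $\min g$ and $\|g\|_{C^0}\le\|g\|_{C^2}$, producing a constant $C_1=C_1(n,\min g,\|g\|_{C^2})$ with $C_1^{-1}\le h\le C_1$ on $\Sn$. The $C^1$ estimate $|\nabla h|\le C_2(C_1)$ is then automatic from the fact that $h$ extends as a convex $1$-homogeneous function on $\Rn$ with prescribed size on $\Sn$.

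For the $C^2$ bound --- the main obstacle --- I would rewrite \eqref{eq-L_pcms} at $p=0$ as
\[
\sigma_j(\mathcal{A}h)\;=\;\binom{n-1}{j}\,\frac{g}{h}\;=:\;\psi,
\]
so that $\psi$ and $\log\psi$ have $L^\infty$ and $C^2$ norms controlled by $n,\min g,\|g\|_{C^2}$ thanks to Steps 1--2. I would then apply the maximum principle to a test function of the largest principal radius of curvature $\lambda_{\max}(\mathcal{A}h)$, following verbatim the argument of Guan--Xia \cite[Prop.~4.2]{GuX18}: differentiate the equation $\log\sigma_j(\mathcal{A}h)=\log\psi$ twice at an interior maximum of the test function, invoke the concavity of $\log\sigma_j$ on the positive cone $\Gamma_j^+$, and absorb the lower-order terms using the available $C^0$ and $C^1$ bounds. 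The sole departure from \cite{GuX18}, which treats $0<p<1$, is that the right-hand side carries $1/h$ instead of $1/h^{1-p}$; since $\log h$ and its first two spherical derivatives are already uniformly controlled, this substitution is harmless and the Guan--Xia computation yields $\|\mathcal{A}h\|_{L^\infty}\le C_3(n,\min g,\|g\|_{C^2})$.

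Finally, with $\mathcal{A}h$ bounded above and $\sigma_j(\mathcal{A}h)=\psi\ge\binom{n-1}{j}(\min g)/C_1>0$ bounded below, \eqref{eq-L_pcms} falls into the class of concave fully nonlinear elliptic equations on $\Sn$ with $C^2$ right-hand side, for which the Evans--Krylov-type theorem of Guan--Ma \cite[Theorem~3.3]{GuM03} supplies the desired H\"older estimate $\|h\|_{C^{2,\alpha}}\le C(n,\alpha,\min g,\|g\|_{C^2})$, completing the proof.
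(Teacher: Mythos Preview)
Your proposal is correct and follows essentially the same route as the paper, which only sketches the argument in a single sentence: the $C^0$ bound from Theorem~\ref{lower_bound_sup_fun}, then the maximum principle as in \cite[Proposition~4.2]{GuX18} for the $C^2$ bound, and finally \cite[Theorem~3.3]{GuM03} for the $C^{2,\alpha}$ estimate. You have in fact supplied more detail than the paper itself does.
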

Concerning Theorem \ref{lower_bound_sup_fun}, we first establish the upper bound. 
\begin{lemma}\label{upper_bound_sup_fun} Let $p\in[0,1)$ and $j\in\{1,\dots,n-1\}$. Under the assumptions of Theorem \ref{lower_bound_sup_fun}, there exists a constant $C>0$, that depends only on $n,p, \|\log g\|_{L^\infty}$, such that    
\begin{equation*}
h_K(x)<C, \qquad\forall x\in \Sn.    \end{equation*}
\end{lemma}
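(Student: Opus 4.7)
The plan is to obtain an upper bound on $R:=\max_{\Sn}h_K=h_K(\theta_0)$ by combining an integral identity derived from the equation with a geometric consequence of the symmetry of $K$. Since $K$ is centrally symmetric, $\pm R\theta_0\in K$, and hence $h_K(\theta)\ge R|\langle\theta,\theta_0\rangle|$ on $\Sn$. Multiplying \eqref{eq-L_pcms} by $h_K$ and integrating gives
$$V_{j+1}(K)=\frac{d_{n,j+1}}{n}\int_{\Sn}h_K\,dS_j(K,\cdot)=\frac{d_{n,j+1}}{n}\int_{\Sn}h_K^p\, g\, d\Ha,$$
so the pointwise bounds $h_K\le R$ and $g\le e^{\|\log g\|_{L^\infty}}$ yield $V_{j+1}(K)\le C_1 R^p$ with $C_1=C_1(n,j,\|\log g\|_{L^\infty})$.

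Next, setting $I_0:=[-R\theta_0,R\theta_0]\subseteq K$ and invoking the projection formula \eqref{eq-mixed-vol-proj}, monotonicity of mixed volumes produces
$$V_{j+1}(K)\ge d_{n,j+1}V(I_0,K[j],B_2^n[n-j-1])=c_2R\,V_j^{n-1}(K|\theta_0^\perp),$$
where $V_j^{n-1}$ denotes the $j$-th intrinsic volume in $\theta_0^\perp\cong\Rnm$. Together with the previous estimate this gives $R^{1-p}V_j^{n-1}(K|\theta_0^\perp)\le C_3$, reducing the problem to a uniform lower bound on $V_j^{n-1}(K|\theta_0^\perp)$.

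For this bound I would iterate the construction: pick pairwise orthogonal $\theta_0,\dots,\theta_{j-1}\in\Sn$ greedily with $R_k:=\max\{h_K(\theta):\theta\in E_k\cap\Sn\}$, $E_k:=\mathrm{span}(\theta_0,\dots,\theta_{k-1})^\perp$, attained at $\theta_k$. Applying the general form of \eqref{eq-mixed-vol-proj} (see \cite[Theorem 5.3.1]{Sch14}) with the orthogonal segments $I_k=[-R_k\theta_k,R_k\theta_k]\subseteq K$, and using that for symmetric convex bodies the mean width dominates the outradius, one obtains
$$V_{j+1}(K)\ge c_4\, R_0R_1\cdots R_j,$$
hence $R_0^{1-p}R_1\cdots R_j\le C_5$.

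The main obstacle is to produce uniform lower bounds on $R_1,\dots,R_j$ depending only on $n,p,\|\log g\|_{L^\infty}$. These I expect to come from the pointwise equation at each $\theta_k$: since $\theta_k$ realizes a maximum of $h_K$ on the totally geodesic submanifold $E_k\cap\Sn$, the second-order condition together with Cauchy's interlacing forces $n-1-k$ of the principal radii of curvature $\lambda_i(\mathcal{A}h_K(\theta_k))$ to be at most $R_k$. Splitting $s_j(K,\theta_k)=g(\theta_k)R_k^{p-1}$ according to how many of the remaining, \emph{a priori} unconstrained, eigenvalues enter each monomial of the elementary symmetric polynomial, the two-sided bound $e^{-\|\log g\|_{L^\infty}}\le g\le e^{\|\log g\|_{L^\infty}}$ should produce a dichotomy forcing either $R_k\ge c>0$ directly, or enough control on the unconstrained eigenvalues to close the iteration. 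The delicate point, and precisely the improvement over \cite{Gu21}, is to carry out this bookkeeping using only $\|\log g\|_{L^\infty}$ rather than any higher-order norm of $g$.
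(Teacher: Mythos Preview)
Your first five steps are correct and give the useful inequality $R_0^{1-p}R_1\cdots R_j\le C_5$. The gap is entirely in step~6: the proposed second--order argument at the points $\theta_k$ cannot produce the lower bounds $R_k\ge c$ for $k\ge 1$. At $\theta_1$, for instance, you correctly get that $n-2$ eigenvalues of $\mathcal{A}h_K(\theta_1)$ are $\le R_1$, but the remaining eigenvalue $\mu$ is genuinely unconstrained: for a convex body contained in $R_0 B_2^n$ there is \emph{no} a~priori upper bound on the principal radii of curvature (a thin ellipsoid with semi-axes $a\gg b$ has principal radius $a^2/b$ at the equator). The equation at $\theta_1$ then reads
\[
g(\theta_1)\,R_1^{p-1}=s_j(K,\theta_1)\le C\bigl(R_1^{j}+\mu\,R_1^{\,j-1}\bigr),
\]
which for small $R_1$ only forces $\mu\gtrsim R_1^{\,p-j}$ and yields no contradiction. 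The ``dichotomy'' you sketch never closes, because nothing in the setup bounds $\mu$ from above. A secondary issue is that \eqref{eq-L_pcms} only holds a.e., so evaluating it at the specific points $\theta_k$ would also require justification.

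The paper bypasses this difficulty by working with a \emph{second} integrated identity rather than with pointwise values. Integrating \eqref{eq-L_pcms} directly (without the extra factor $h_K$) gives
\[
\frac{n}{d_{n,j}}\,V_j(K)=\int_{\Sn}s_j(K,\cdot)\,d\Ha=\int_{\Sn}g\,h_K^{\,p-1}\,d\Ha\;\le\;e^{\|\log g\|_{L^\infty}}\int_{\Sn}h_K^{\,p-1}\,d\Ha .
\]
Because $h_K(u)\ge R_0\,|\langle u,\theta_0\rangle|$ and $p-1<0$, the right-hand side is at most $C R_0^{\,p-1}$ for $p\in(0,1)$ (the paper obtains this via the John ellipsoid and an estimate from \cite{LiLiLu}, but the one-line bound above suffices for $p>0$). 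So $V_j(K)\to 0$ as $R_0\to\infty$, while your own estimate $V_{j+1}(K)\ge c\,R_0^{\,p}\to\infty$ persists. The Aleksandrov--Fenchel inequality $V_{j+1}(K)\le C_{n,j}\,V_j(K)^{(j+1)/j}$ then gives the contradiction. In short: the missing idea is to pair your lower bound on $V_{j+1}$ with an \emph{upper} bound on $V_j$, rather than to seek a second lower bound on $V_{j+1}$ via $R_1,\dots,R_j$. The case $p=0$ needs a more careful John--ellipsoid analysis (with a logarithmic term), and this is where the argument becomes genuinely delicate.
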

\begin{proof}
  Assume that this is not the case. Then there is a sequence $\{K_m\}$ of symmetric convex bodies with $h_m:=h_{K_m}$ and $s_{j,m}:=s_j(K_m,\cdot)$ satisfying $h_m^{1-p}s_{j,m}=g_m$ for some even $g_m>0$,  such that 
    $$\frac{1}{c}\leq g_m\leq c$$ and a sequence of unit vectors $\{x_m\}$, such that 
    $$c_m:=\max_{x\in\Sn}h_m(x)=h_m(x_m)\to\infty.$$
We have\begin{equation}\label{V_j+1_lower_bound}
\frac{n}{d_{n,j+1}}V_{j+1}(K_m)=nV(K_m[j+1],B^n_2[n-j-1])=\int_{\Sn}h_ms_{j,m}d\Ha\geq\frac{1}{c}\int_{\Sn}h_m^pd\Ha.   
\end{equation}
On the other hand, 
$$\int_{\Sn}s_{j,m}d\Ha=\int_{\Sn}g_mh_m^{p-1}d\Ha$$  implies 
\begin{equation}\label{V_j_upper_bound}
\frac{n}{d_{n,j}}V_j(K_m)\leq c\int_{\Sn}\frac{1}{h_m^{1-p}}d\Ha .
\end{equation}
Let ${\cal E}_m$ be the John ellipsoid of $K_m$. Then, for $x\in \Sn$, $h_{{\cal E}_m}(x)=|A_mx|$, for some symmetric and positive definite matrix $A_m$. Let $t_1^m\geq t_2^m\geq\dots\geq t_n^m>0$ be the eigenvalues of $A_m$, written in decreasing order. Clearly, since ${|A_mx|}\leq h_m(x)\leq \sqrt{n}{|A_mx|}$ and since $\max_{x\in\Sn}h_m(x)\to\infty$, it must hold $t_1^m\to \infty$. Also, by \cite[Lemma 3.1]{LiLiLu}, we have
\begin{equation}\label{eq-eigen}\int_{\Sn}\frac{1}{|A_mx|^{1-p}}d\Ha(x)\approx \begin{cases}
\frac{1}{t_1^m}, &0<p<1\\
\frac{1+\log t_1^m-\log t_2^m}{t_1^m}, & p=0
\end{cases},\end{equation}
where $a\approx b$ means that the quantity $a/b$ is bounded from above and from below by constants that depend only on $n$ and $p$.

\textbf{Case $0<p<1$}.\\
By symmetry, we have that for every $x\in\Sn$, $h_m(x)\geq c_m\left|\langle x_m,x\rangle\right|$, which gives 
    $$\int_{\Sn}h_m^p(x)d\Ha(x)\geq c_m^p\int_{\Sn}\left|\langle x_m,x\rangle\right|^pd\Ha(x)= c_m^p\int_{\Sn}\left|\langle u,x\rangle\right|^pd\Ha(x),$$where $u$ is a (any) fixed vector in $\Sn$. Thus,
    $$\int_{\Sn}h_m^p(x)d\Ha(x)\to\infty$$
    and at last, by \eqref{V_j+1_lower_bound},
    $$V_{j+1}(K_m)\to\infty,$$
while by \eqref{V_j_upper_bound}, \eqref{eq-eigen} and $t_1^m\to\infty$, it follows that
$$V_j(K_m)\to 0.$$
This is a contradiction because of the following special case of the Aleksandrov-Fenchel inequality (see \cite[Chapter 7]{Sch14}) 
\begin{equation}\label{Aleksandrov-Fenchel}
V_{j+1}(K_m)\leq C_{n,j}V_j(K_m)^{\frac{j+1}{j}},
\end{equation}
where $C_{n,j}=\kappa_n^{-\frac{1}{j}}\kappa_{n-j-1}^{-1}\kappa_{n-j}^{\frac{j+1}{j}}\binom{n}{j+1}\binom{n}{j}^{-\frac{j+1}{j}}$.
 
\textbf{Case $p=0$}.\\
In this case, \eqref{V_j+1_lower_bound} takes the form\begin{equation}\label{bounded_away_from_0}
       d_{n,j+1}^{-1}V_{j+1}(K_m)\geq\frac{\kappa_n}{c},
    \end{equation}
 %  $$\int_{\Sn}h_m^{p_m}(x)d\Ha(x)\geq c_m^{p_m}\int_{\Sn}\left|\langle x_m,x\rangle\right|^{p_m}d\Ha(x).$$
 %   But $x_m$ is simply a unit vector for every $m$, so the integral at the right-hand-side of the inequality is in fact independent of $x_m$ and depends only on $n,p_m$. It is deduced that
 %   $$\int_{\Sn}\left|\langle x_m,x\rangle\right|^{p_m}d\Ha(x)$$ is bounded away from $0$.
  %  This yields
  %  $$\int_{\Sn}h_m^{p_m}(x)d\Ha(x)\to\infty$$
  %  and at last
  %  $$V_{j+1}(K_m)\to\infty$$
while it follows again from \eqref{V_j_upper_bound} and \eqref{eq-eigen} that 
$$V_j(K_m)\leq \frac{cd_{n,j}}{n}\int_{\Sn}\frac{1}{h_m}d\Ha\leq \frac{cd_{n,j}}{n}\int_{\Sn}\frac{1}{|A_mx|}d\Ha(x)\approx\frac{1+\log(t_1^m)-\log(t_2^m)}{t_1^m}\approx-\frac{\log(t_2^m)}{t_1^m}.$$
If $-\frac{\log(t_2^m)}{t_1^m}\to0$, then, utilizing again \eqref{Aleksandrov-Fenchel}, we immediately get 
$$V_{j+1}(K_m)\to0.$$ 
 If $-\frac{\log(t_2^m)}{t_1^m}\not\to0$, then by taking subsequences we may assume that there is $c'>0$ such that for all $m$, $$-\frac{\log(t_2^m)}{t_1^m}\geq c'$$ and hence $t_i^m\to0$ for any $i=2,\dots,n$.

Without loss of generality, we may assume that the principal axes of ${\cal E}_m$ are the coordinate axes. Consider the parallelepiped $\mathcal{P}_m:=\prod_{i=1}^n{[-t_i^m,t_i^m]}$. Since $\mathcal{P}_m\supseteq \mathcal{E}_m$, we have
\begin{eqnarray*}V_{j+1}(K_m)&\approx& V_{j+1}(\mathcal{E}_m)\leq V_{j+1}(\mathcal{P}_m)=2^{j+1}\sum_{1\leq l_1<\dots<l_{j+1}\leq n}t_{l_1}^mt_{l_2}^m\dots t_{l_{j+1}}^m\\
&\leq&2^{j+1}\binom{n}{j+1}t_1^m(t_2^m)^j\leq-\frac{2^{j+1}\binom{n}{j+1}\log(t_2^m)(t_2^m)^j}{c'}\to0.\end{eqnarray*}
Hence, in both cases we conclude that $V_{j+1}(K_m)\to0$, which contradicts \eqref{bounded_away_from_0}.
\end{proof}
Now consider a lower-dimensional compact convex set $K\subseteq E$, for some proper subspace $E$ of $\R^n$. %$K\subseteq N^{\perp}$ for some $N\in\Sn$. 
We distinguish between $S_j(K,\cdot):{\cal B}(\Sn)\to [0,\infty)$ (the area measure of order $j$ of $K$, when regarded as a compact convex set in $\Rn$) and $S_{j}^{E}(K,\cdot):{\cal B}(\Sn\cap E)\to [0,\infty)$ (the area measure of order $j$ of $K$, when regarded as a compact convex set in $E$). We, also, set $s_j^E(K,\cdot)$ to be the normalized elementary symmetric function of order $j$ of the principal radii of curvature of $K$ (when these are well defined), when $K$ is regarded as a set in $E$. Then, $s_j^E(K,\cdot)$ is the density of the absolutely continuous part of $S_j^E(K,\cdot)$. Fix $N\in \Sn$. For the rest of this section $\Tilde{u}$ will stand for the geodesic (spherical) projection $\Tilde{u}=\frac{u|N^\perp}{|u|N^\perp|}$ of an arbitrary unit vector $u\neq\pm N$ onto the equator $N^{\perp}\cap\Sn$.  
\begin{proposition}\label{geodesic_proj}
Fix $j\in\{1,\dots,n-2\}$ and assume that $K\subseteq N^{\perp}$ (for some $N\in\Sn$) is a compact convex set in $N^{\perp}$. Let $u\in\Sn\setminus\{\pm N\}$ be a point, such that ${\cal A}h_K(\Tilde{u})$ is well defined. Then, ${\cal A}h_K(u)$ is also well defined and the following identity holds 
\begin{equation}\label{eq-geod-proj}s_j(K,u)=\frac{1-\frac{j}{n-1}}{\langle u,\Tilde{u}\rangle^j}s_j^{N^{\perp}}(K,\Tilde{u}).\end{equation}
\end{proposition}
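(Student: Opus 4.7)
The approach is a direct computation based on the chain rule. Since $K\subseteq N^{\perp}$, the support function of $K$ as a function on $\R^n$ factors through the orthogonal projection $\pi:\R^n\to N^\perp$, $\pi(x)=x-\langle x,N\rangle N$, as $h_K(x)=h_K^{N^\perp}(\pi(x))$ for all $x\in\R^n$. The plan is to: (1) use the chain rule to relate $D^2 h_K(u)$ to $D^2 h_K^{N^\perp}(\pi(u))$ and deduce the existence of ${\cal A}h_K(u)$ from that of ${\cal A}h_K^{N^\perp}(\tilde u)$; (2) choose an orthonormal basis of $u^\perp$ in which ${\cal A}h_K(u)$ becomes block-diagonal with a trivial last block and an explicit relationship to ${\cal A}h_K^{N^\perp}(\tilde u)$ on the other block; (3) read off the eigenvalues and compute $s_j$.

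For step (2), set $c:=\langle u,\tilde u\rangle=|u|N^\perp|>0$ and $s:=\langle u,N\rangle$, so that $u=c\tilde u+sN$. Let $V:=\mathrm{span}(u,N)=\mathrm{span}(\tilde u,N)$ and pick an orthonormal basis $\{f_1,\dots,f_{n-2}\}$ of $V^\perp$. Since $V^\perp\subseteq u^\perp\cap\tilde u^\perp\cap N^\perp$, the $f_\ell$ form an orthonormal basis of the tangent space $T_{\tilde u}(\Sn\cap N^\perp)$. Completing with the unit vector $\nu:=-s\tilde u+cN\in V\cap u^\perp$ gives an orthonormal basis of $u^\perp$. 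The crucial observation is that $\pi(f_\ell)=f_\ell$ while $\pi(\nu)=-s\tilde u$, so that using $D^2 h_K^{N^\perp}(\tilde u)\tilde u=0$ (a consequence of the $0$-homogeneity of $Dh_K^{N^\perp}$) together with the homogeneity identity $D^2 h_K^{N^\perp}(c\tilde u)=c^{-1}D^2 h_K^{N^\perp}(\tilde u)$, the chain rule yields
\[
    {\cal A}h_K(u)(f_i,f_k)=c^{-1}{\cal A}h_K^{N^\perp}(\tilde u)(f_i,f_k),\qquad {\cal A}h_K(u)(\cdot,\nu)=0.
\]
Hence, in the basis $\{f_1,\dots,f_{n-2},\nu\}$, the matrix ${\cal A}h_K(u)$ is block-diagonal with an $(n-2)\times(n-2)$ block equal to $c^{-1}\,{\cal A}h_K^{N^\perp}(\tilde u)$ and a trailing zero.

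For step (3), letting $\lambda_1,\dots,\lambda_{n-2}$ denote the eigenvalues of ${\cal A}h_K^{N^\perp}(\tilde u)$, the eigenvalues of ${\cal A}h_K(u)$ are $c^{-1}\lambda_1,\dots,c^{-1}\lambda_{n-2},0$. Consequently, for $j\le n-2$ only the tuples avoiding the zero eigenvalue contribute to the normalized $j$-th elementary symmetric function, and
\[
    s_j(K,u)=\binom{n-1}{j}^{-1}c^{-j}e_j(\lambda_1,\dots,\lambda_{n-2})=\frac{\binom{n-2}{j}}{\binom{n-1}{j}}\cdot\frac{1}{\langle u,\tilde u\rangle^{j}}\,s_j^{N^\perp}(K,\tilde u),
\]
which gives \eqref{eq-geod-proj} after simplifying $\binom{n-2}{j}/\binom{n-1}{j}=(n-1-j)/(n-1)=1-j/(n-1)$. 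The only real obstacle is bookkeeping: one must be careful about distinguishing the Hessian of $h_K$ in $\R^n$ from the Hessian of $h_K^{N^\perp}$ in $N^\perp$, and track which basis vectors survive under $\pi$, so that the zero direction in the Hessian can be correctly attributed to $\nu$.
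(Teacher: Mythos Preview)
Your proof is correct and follows essentially the same approach as the paper: both choose an orthonormal basis of $u^\perp$ consisting of $n-2$ vectors in $\tilde u^\perp\cap N^\perp$ together with one vector $\nu$ in $\mathrm{span}(u,N)\cap u^\perp$, use the $0$-homogeneity of $Dh_K$ in the radial direction to kill the $\nu$-row/column, and use the $(-1)$-homogeneity of the Hessian to extract the factor $\langle u,\tilde u\rangle^{-1}$ on the remaining $(n-2)\times(n-2)$ block. The only cosmetic difference is that you phrase the chain rule via the linear projection $\pi$ onto $N^\perp$, whereas the paper carries out the same bookkeeping with explicit directional derivatives.
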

A slightly weaker (but still sufficient for our purposes) version of Proposition \ref{geodesic_proj}, stating that \eqref{eq-geod-proj} holds for almost every $u\in \Sn$, could follow from equation (4.3) and Theorem 5.2 in \cite{GoWeKi}. We choose to state and prove Proposition \ref{geodesic_proj} for transparency reasons and for completeness.
\begin{proof}[Proof of Proposition \ref{geodesic_proj}.]
We write $h:=h_K$. Consider an orthonormal basis $\{e_1,\dots,e_n\}$ of $\R^n$ that satisfies $e_n=u$ and $N\in \textnormal{span}\{e_{n-1},e_n\}$. Obviously, $\{\Tilde{u},N\}$ is an orthonormal basis of $\textnormal{span}\{e_{n-1},e_n\}$ and $\{e_1,\dots,e_{n-2}\}$ is an orthonormal basis of $\Tilde{u}^\perp\cap N^\perp$. 
For a function $\varphi:\R^n\to\R$ and vectors $x,v\in\R^n$, we abbreviate $\varphi_v(x):=D_v\varphi(x)$ (the directional derivative of $\varphi$ at $x\in\Rn$ along the direction $v$) and, more shortly, $\varphi_i(x):=\varphi_{e_i}(x)$, $i=1,\dots,n$.
{In the same way, if $\varphi\in C^2(\Rn)$, we abbreviate $\varphi_{i,k}(x):=D_{e_k}\varphi_i(x)$.} Note first that for every $x\in\Rn$ and every $t\in\R$, $$h(x+tN)=\max\{\langle y,x+tN\rangle:y\in K\}=\max\{\langle y,x\rangle:y\in K\}=h(x).$$
This implies that if $h$ is twice differentiable at $x\in \Sn$, then
$$h_{n-1}(x)=\langle e_{n-1},N\rangle h_N(x)+\langle e_{n-1},\Tilde{u}\rangle h_{\Tilde{u}}(x)=\langle e_{n-1},\Tilde{u}\rangle h_{\Tilde{u}}(x),$$
hence, for $i=1,\dots,n-2$, we have 
$$h_{n-1,i}(x)=\langle e_{n-1},\Tilde{u}\rangle h_{\Tilde{u},i}(x)$$
and $$h_{n-1,n-1}(x)=\langle e_{n-1},\Tilde{u}\rangle h_{\Tilde{u},n-1}(x)=\langle e_{n-1},\Tilde{u}\rangle\big(\langle e_{n-1},N\rangle h_{\Tilde{u},N}(x)+\langle e_{n-1},\Tilde{u}\rangle h_{\Tilde{u},\Tilde{u}}(x)\big)=\langle e_{n-1},\Tilde{u}\rangle^2h_{\Tilde{u},\Tilde{u}}(x),$$
%hence $$h_{n-1,n-1}(u)=0.$$ 
Now, by \eqref{eq-hessian-alter}, ${\cal A}h(x)$ (with respect to the orthonormal basis $e_1,\dots,e_{n-1}$ of $u^{\perp}$) can be expressed as follows
$${\cal A}h(x)=\left[\begin{matrix}
  h_{1,1} & \dots & h_{1,n-2} & \langle e_{n-1},\Tilde{u}\rangle h_{1,\Tilde{u}}\\
  .& &.&.\\.& &.&.\\.& &.&.\\
  h_{n-2,1} & \dots & h_{n-2,n-2} & \langle e_{n-1},\Tilde{u}\rangle h_{n-2,\Tilde{u}}\\
  \langle e_{n-1},\Tilde{u}\rangle h_{\Tilde{u},1}& \dots & \langle e_{n-1},\Tilde{u}\rangle h_{\Tilde{u},n-2} & \langle e_{n-1},\Tilde{u}\rangle^2 h_{\Tilde{u},\Tilde{u}}
\end{matrix}\right](x).$$
%and $$f_{K,j}(u)=D_{n-1}\big(A_{u^{\perp}}h(u)[j],\Id_{n-1}[n-j-1]\big)=\binom{n-1}{j}^{-1}S_j(A_{u^{\perp}}h(u))$$
In particular, since for every $i,k$ it is true that $h_{i,k}(u)=h_{i,k}(\langle u,\Tilde{u}\rangle \Tilde{u})=\frac{1}{\langle u,\Tilde{u}\rangle}h_{i,k}(\Tilde{u})$, we conclude that ${\cal A}h(u)$ is well defined and
$${\cal A}h(u)=\frac{1}{\langle u,\Tilde{u}\rangle}\left[\begin{matrix}
  h_{1,1} & \dots & h_{1,n-2} & \langle e_{n-1},\Tilde{u}\rangle h_{1,\Tilde{u}}\\
  .& &.&.\\.& &.&.\\.& &.&.\\
  h_{n-2,1} & \dots & h_{n-2,n-2} & \langle e_{n-1},\Tilde{u}\rangle h_{n-2,\Tilde{u}}\\
  \langle e_{n-1},\Tilde{u}\rangle h_{\Tilde{u},1}& \dots & \langle e_{n-1},\Tilde{u}\rangle h_{\Tilde{u},n-2} & \langle e_{n-1},\Tilde{u}\rangle^2 h_{\Tilde{u},\Tilde{u}}
\end{matrix}\right](\Tilde{u})=\frac{1}{\langle u,\Tilde{u}\rangle}{\cal A}h(\Tilde{u}).$$
But also 1-homogeneity of $h$ yields 0-homogeneity of its first-order partial derivatives, which immediately gives $$h_{1,\Tilde{u}}(\Tilde{u})=\dots=h_{n-2,\Tilde{u}}(\Tilde{u})=h_{\Tilde{u},\Tilde{u}}(\Tilde{u})=0.$$
Let ${\cal A}^{N^\perp}$ be the operator ${\cal A}$, defined on functions from the sphere $N^\perp\cap \Sn$. Notice that with respect to the orthonormal basis $\{e_1,\dots,e_{n-2}\}$ of $N^\perp\cap \Tilde{u}^\perp$, we have ${\cal A}^{N^\perp} (h|_{N^\perp})(\Tilde{u})=(h_{ik}(\Tilde{u}))_{i,k=1}^{n-2}$, where $h|_{N^\perp}$ is the support function of $K$, when regarded as a compact convex set in $N^\perp$.

We conclude that if $\lambda_1\geq\dots\geq\lambda_{n-1}$ are the eigenvalues of ${{\cal A}}h(\Tilde{u})$, then $\lambda_{n-1}=0$ and $\lambda_1,\dots,\lambda_{n-2}$ are the eigenvalues of ${{\cal A}^{N^\perp}} (h|_{N^\perp})(\Tilde{u})$ and, also, the eigenvalues of ${\cal A}h(u)$ are precisely the numbers $\lambda_1/\langle u,\Tilde{u}\rangle,\dots, \lambda_{n-1}/\langle u,\Tilde{u}\rangle$. Thus,
\begin{eqnarray}\label{reduction_to_equator}
    s_j(K,u)
  &=&\frac{1}{\langle u,\Tilde{u}\rangle^j\binom{n-1}{j}}\sum_{1\leq i_1<\dots<i_j\leq n-1}\lambda_{i_1}\dots\lambda_{i_j}\nonumber\\
  &=&\frac{1}{\langle u,\Tilde{u}\rangle^j\binom{n-1}{j}}\sum_{1\leq i_1<\dots<i_j\leq n-2}\lambda_{i_1}\dots\lambda_{i_j}\nonumber\\
% &=&\frac{1}{\langle u,\Tilde{u}\rangle^j\binom{n-1}{j}}\cdot\left(\substack{sum~of~all~j-th~order\\prin.~minors~of~A_{u^{\perp}}h(\Tilde{u})}\right)\\
%&=&\frac{1}{\langle u,\Tilde{u}\rangle^j\binom{n-1}{j}}\cdot\left(\substack{sum~of~all~j-th~order\\prin.~minors~of~A_{\textnormal{span}\{e_1,\dots,e_{n-2}\}}h(\Tilde{u})}\right)\\
&=&\frac{\binom{n-2}{j}s^{N^\perp}_j(K,\Tilde{u})}{\langle u,\Tilde{u}\rangle^j\binom{n-1}{j}}\nonumber=\frac{1-\frac{j}{n-1}}{\langle u,\Tilde{u}\rangle^j}s^{N^\perp}_j(K,\Tilde{u}).
\end{eqnarray}
\end{proof}

\begin{proof}[Proof of Theorem \ref{lower_bound_sup_fun}]
Assume that the assertion of the theorem is not true. Then there will be a sequence $\{K_m\}$ of symmetric convex bodies with (according to Lemma \ref{upper_bound_sup_fun}) uniformly bounded support functions satisfying $h_{K_m}^{1-p}s_j(K_m,\cdot)=g_m$ with $c^{-1}<g_m<c$, for some fixed constant $c>0$ and $\min_{x\in\Sn}h_{K_m}(x)\to0$. By taking subsequences, we may assume that $\{K_m\}$ converges to some symmetric compact convex set $K$ that is contained in a subspace $N^\perp$, for some direction $N\in\Sn$ and $\{g_m\}$ converges weakly to some measurable function $g:\Sn\to\R$ that satisfies $c^{-1}\leq g\leq c$. On the other hand, we know (see \cite[Chapter 4]{Sch14}) that the sequence of measures $S_{j}(K_m,\cdot)=s_j(K_m,\cdot)d\Ha$ converges weakly to $S_j(K,\cdot)$, thus (since $h_{K_m}\to h_K$ uniformly) we conclude that $g_md\Ha=h_{K_m}^{1-p}s_j(K_m,\cdot)d\Ha\to h_K^{1-p}S_j(K,\cdot)$, weakly. From this we arrive at
$$h_K^{1-p}s_j(K,\cdot)=g,$$in the sense of \eqref{eq-L_pcms}. 

Next, notice that $s_{j}^{N^{\perp}}(K,\cdot)$ cannot be zero almost everywhere on $N^\perp\cap\Sn$, otherwise by Proposition \ref{geodesic_proj}, this would imply that $s_j(K,\cdot)$, and hence $g$, is zero almost everywhere on $\Sn$. This means that there is a constant $c'>0$ such that the set
$$\Omega:=\{x\in\Sn\cap N^{\perp}:h_K^{1-p}(x)s_j^{N^{\perp}}(K,x)>c'\}$$ has positive $\mathcal{H}^{n-2}$ measure. Consequently, for $t\in(0,1]$, the set
$$\Omega_t:=\{u\in\Sn:\Tilde{u}\in\Omega,~\langle u,\Tilde{u}\rangle<t\}$$ also has positive $\Ha$ measure.
Applying Proposition \ref{geodesic_proj} and using the fact that $h_K(u)=\langle u,\Tilde{u}\rangle h_K(\Tilde{u})$, %\begin{equation}\label{eq-re}h_K(u)=\langle u,\Tilde{u}\rangle h_K(\Tilde{u}),\end{equation} 
we see that for every $u\in\Omega_t$, it holds
$$g(u)=\langle u,\Tilde{u}\rangle^{1-p}h_K(\Tilde{u})^{1-p}s_j(K,u)=\frac{n-j-1}{n-1}\langle u,\Tilde{u}\rangle^{1-p-j}h_K^{1-p}(\Tilde{u})s_j^{N^{\perp}}(K,\Tilde{u})>c'\frac{n-j-1}{n-1}t^{1-p-j},$$
where we used the fact that $1-p-j<0$. But then, this lower bound may become as large as we wish, thus $\textnormal{ess}\sup g=\infty$, which is a contradiction.
\end{proof}
One of the difficulties in the study of the Christoffel-Minkowski problem is the lack of satisfactory regularity results like the ones that are valid for the classical Minkowski problem, developed mainly by Caffarelli \cite{Caf90a}, \cite{Caf90b}. The following lemma (in particular, part (ii)) will serve, for our purposes, as substitute of Caffarelli's regularity theory.

\begin{lemma}\label{lemma-regularity-sub}Let $0<a<1$, $j\in\{1,\dots,n-2\}$. Then, the following hold.
\begin{enumerate}[i)]
    \item For $p\in[0,1]$, there exists a positive constant $\delta_p$, such that if $g:\Sn\to \R$ is even and $\|g-1\|_{C^\alpha}<\delta_p$, then there exists an even $C^{2,\alpha}$-classical solution to \eqref{eq-L_pcms} (with these particular $p$ and $g$).
    \item There exist positive constants $\delta_1,\ C$, that depend only on $n,a$, such that if $K$ is a symmetric convex body whose $j$-th area measure is absolutely continuous with respect to the Haar measure and $\|s_j(K,\cdot)-1\|_{C^\alpha}<\delta_1$, then $h_K\in C^{2,\alpha}$ and $\|h_K\|_{C^{2,\alpha}}\leq C$.\end{enumerate}
\end{lemma}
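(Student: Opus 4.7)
\smallskip\noindent\textbf{Strategy.} The plan is to establish (i) by an implicit function theorem argument at the constant support function $h\equiv 1$, and to derive (ii) by applying (i) at $p=1$ and then invoking the known uniqueness (up to translation) for the classical Christoffel--Minkowski problem.

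\smallskip\noindent\textbf{Proof of (i).} For a fixed $p\in[0,1]$ I consider the nonlinear operator
\[
F(h,g):=h^{1-p}s_j(h,\cdot)-g,
\]
defined on a small $C^{2,\alpha}$-neighbourhood $U$ of $1$ inside the space of even functions on $\Sn$ and taking values in the space $C^\alpha_{\mathrm{even}}(\Sn)$. Choosing $U$ small enough guarantees $h>0$ and ${\cal A}h>0$, and since $s_j(h,\cdot)$ is polynomial in the entries of ${\cal A}h$, the map $F$ is smooth with $F(1,1)=0$. A direct computation (using that $\partial S_j/\partial\lambda_i|_{\lambda=(1,\dots,1)}=\binom{n-2}{j-1}$, so that the linearisation of $s_j$ at $h\equiv 1$ acting on $z$ is $\frac{j}{n-1}(\Delta_{\Sn}z+(n-1)z)$) gives the Fr\'echet derivative
\[
Lz:=D_hF(1,1)\,z=\frac{j}{n-1}\Delta_{\Sn}z+(1+j-p)z,
\]
which is uniformly elliptic and self-adjoint, hence Fredholm of index $0$ between $C^{2,\alpha}_{\mathrm{even}}$ and $C^\alpha_{\mathrm{even}}$. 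Decomposing on spherical harmonics $Y_k$ one has $LY_k=\bigl[(1+j-p)-\tfrac{j}{n-1}k(k+n-2)\bigr]Y_k$; on the even subspace ($k=0,2,4,\dots$) the $k=0$ eigenvalue is $1+j-p>0$, the $k=2$ eigenvalue equals $1-p-\tfrac{j(n+1)}{n-1}$, which is strictly negative since $\tfrac{j(n+1)}{n-1}>1$ for $n\ge 3,\ j\ge 1$, and for even $k\ge 4$ the eigenvalues are strictly more negative. Therefore $\ker L\cap C^{2,\alpha}_{\mathrm{even}}=\{0\}$, and $L$ is an isomorphism. The implicit function theorem then produces $\delta_p>0$ and a smooth solution map $g\mapsto h(g)$ with $F(h(g),g)=0$ and the estimate $\|h(g)-1\|_{C^{2,\alpha}}\le C_p\|g-1\|_{C^\alpha}$. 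Shrinking $\delta_p$ if necessary preserves $h(g)>0$ and ${\cal A}h(g)>0$, so $h(g)$ is the support function of an even convex body of class $C^{2,\alpha}$, completing (i).

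\smallskip\noindent\textbf{Proof of (ii) and main obstacle.} Given a symmetric $K$ with $\|s_j(K,\cdot)-1\|_{C^\alpha}<\delta_1:=\delta_{p=1}$, apply (i) with $p=1$ to the function $g:=s_j(K,\cdot)$ to obtain a symmetric $C^{2,\alpha}$-convex body $K'$ with $s_j(K',\cdot)=s_j(K,\cdot)$ and $\|h_{K'}\|_{C^{2,\alpha}}$ controlled by a constant depending only on $n$ and $\alpha$. The classical uniqueness up to translation for the $L_1$-Christoffel--Minkowski problem (recalled in the introduction: two convex bodies with equal $j$-th area measures differ by a translation), combined with origin-symmetry, forces $K=K'$, which transfers the bound to $h_K$. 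The delicate point of the scheme is verifying that no even spherical harmonic degree produces a zero eigenvalue of $L$ uniformly in $p\in[0,1]$: the $k=1$ eigenvalue $1-p$ does vanish exactly at $p=1$, reflecting the translation invariance of the classical Christoffel--Minkowski problem, but odd harmonics are precluded by the evenness hypothesis, which is precisely what allows the IFT argument to proceed uniformly over the whole range $p\in[0,1]$.
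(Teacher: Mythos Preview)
Your proposal is correct and follows essentially the same route as the paper: linearise $h\mapsto h^{1-p}s_j(h,\cdot)$ at $h\equiv 1$ on the space of even $C^{2,\alpha}$ functions, check that the resulting operator $\tfrac{j}{n-1}\Delta_{\Sn}+(j+1-p)$ has trivial kernel on even functions (the paper phrases this as ``$(n-1)(j-p+1)/j$ is not an even eigenvalue of $-\Delta_{\Sn}$'', which is exactly your spherical-harmonic check), apply the inverse/implicit function theorem, and then deduce (ii) from the case $p=1$ together with the translation-uniqueness of $j$-th area measures. Your explicit computation of the $k=0,2$ eigenvalues and the remark that the $k=1$ mode (which vanishes at $p=1$) is killed by evenness merely make transparent what the paper asserts in one line.
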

\begin{proof}
The assertion will be an easy consequence of the inverse function theorem for Banach spaces.
Consider the map $$\Xi_p:C_e^{2,a}\ni h\mapsto h^{1-p}s_j(h,\cdot)\in C_e^a,$$
where $C^{2,\alpha}_e$ (resp. $C^{\alpha}_e$) is the space of $C^{2,\alpha}$ (resp. $C^{\alpha}$) even functions on $\Sn$.
Then, its Fr\'echet derivative at $h=1$ exists and is given by $$T(w)=\frac{j}{n-1}\Delta_{\Sn}w+(j-p+1)w.$$Notice that $(n-1)(j-p+1)/j$ cannot be an eigenvalue of $-\Delta_{\Sn}$ that corresponds to even eigenfunction. {This implies that $T$ has trivial kernel.
At the same time, the operator $\Delta_{\Sn}$ is uniformly elliptic with smooth coefficients. %, thus it is also Fredholm in $H^2(\Sn)$ (having trivial Kernel and range of codimension 1). Since $T$, up to constants, equals $\Delta_{\Sn}$ plus a compact operator, it is Fredholm too. 
%Furthermore, it is self-adjoint on $H^2(\Sn)$ (one may easily check that $\langle\Delta_{\Sn}u,v\rangle_{L^2(\Ha)}=\langle\Delta_{\Sn}v,u\rangle_{L^2(\Ha)}$ for all $u,v\in H^2(\Sn)$ approximating in $H^2$-norm e.g. $v$ with a sequence of $C^\infty$ functions $v_m$). 
By Schauder estimates, Fredholm alternative applies (see e.g. \cite[Theorem 6.15]{GiTr} and the discussion before it) and hence the equation $T(w)=f$ has a unique solution $w\in C^{2,\alpha}(\Sn)$ provided $f\in C^\alpha(\Sn)$. Consequently, $T$ is a bounded linear isomorphism.}

Hence, by the inverse function theorem for Banach spaces {\cite[Theorem 1.1.7.]{Hor}}, there exists a $C^{2,\alpha}$-neighbourhood ${\cal N}_p$ of 1 (depending on $p$), such that $\Xi_p|_{{\cal N}_p}:{\cal N}_p\to \Xi_p({\cal N}_p)$ is a diffeomorphism. This already proves (i). In fact, by choosing ${\cal N}_1$ to be smaller if necessary, we may assume that $h$ is a support function, for any $h\in {\cal N}_1$. Notice, also, that $\Xi_1(1)=1$, thus $\Xi_1({\cal N}_1)$ is an open neighbourhood of 1 in $C^\alpha$. We conclude that for any $g\in \Xi_1({\cal N}_1)$, there exists a symmetric convex body $K$, such that $h_K\in {\cal N}_1$ and $s_j(K,\cdot)=g$. In particular $h_K\in C^{2,\alpha}$ and $\|h_K\|_{C^{2,\alpha}}\leq C$. Part (ii), now,  follows from the fact that area measures uniquely determine convex bodies (up to translation).  
\end{proof}
We will make use of the following elementary fact. If $K$ is a convex body, such that $ K\subseteq RB_2^n$, for some $R>0$, then 
\begin{equation}\label{eq-lipshcitz}
\|h_K\|_{C^{0,1}}\leq R    
\end{equation}
\begin{lemma}\label{lemma-C^a-convergence}
Let $\{h_m\}$ be a sequence of support functions that converges in $C^0$ to a  support function $h$. Then, for $0<a<1$, $h_m\to h$ in $C^\alpha$ norm.    
\end{lemma}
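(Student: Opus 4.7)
The plan is to exploit the fact that support functions on $\Sn$ are automatically Lipschitz with constant controlled by the circumradius of the underlying body, and then use a standard interpolation inequality between $C^0$ and $C^{0,1}$ to upgrade uniform convergence to H\"older convergence for any exponent $\alpha\in(0,1)$.

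First I would observe that since $h_m\to h$ uniformly on $\Sn$, the sequence $\{h_m\}$ is uniformly bounded on $\Sn$. By positive homogeneity, this yields a constant $R>0$ such that the corresponding convex bodies $K_m$ and the body $K$ (with $h_K=h$) are all contained in $RB_2^n$. By \eqref{eq-lipshcitz}, this gives $\|h_m\|_{C^{0,1}}\leq R$ and $\|h\|_{C^{0,1}}\leq R$ for all $m$, so the differences $f_m:=h_m-h$ satisfy $\mathrm{Lip}(f_m)\leq 2R$ uniformly in $m$, while $\|f_m\|_\infty\to 0$.

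Next, I would invoke the elementary interpolation inequality: for any bounded Lipschitz function $f$ on $\Sn$ and any $\alpha\in(0,1)$,
$$
[f]_{C^\alpha(\Sn)}=\sup_{x\neq y}\frac{|f(x)-f(y)|}{|x-y|^\alpha}\leq C_\alpha \|f\|_\infty^{1-\alpha}\,\mathrm{Lip}(f)^\alpha.
$$
This is obtained by splitting the sup into two regimes according to whether $|x-y|$ is above or below $\|f\|_\infty/\mathrm{Lip}(f)$: in the first regime use $|f(x)-f(y)|\leq 2\|f\|_\infty$, in the second use $|f(x)-f(y)|\leq \mathrm{Lip}(f)\,|x-y|$, and optimize.

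Applying this with $f=f_m=h_m-h$ gives $[h_m-h]_{C^\alpha}\leq C_\alpha (2R)^\alpha \|h_m-h\|_\infty^{1-\alpha}\to 0$ as $m\to\infty$. Combined with the uniform convergence $\|h_m-h\|_\infty\to 0$, we conclude $\|h_m-h\|_{C^\alpha}\to 0$. There is no real obstacle here; the only point worth emphasizing is that the uniform Lipschitz bound is a free consequence of the geometry of support functions via \eqref{eq-lipshcitz}, which is precisely why the interpolation argument closes without any extra hypothesis on the sequence.
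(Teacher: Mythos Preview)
Your proof is correct and follows essentially the same approach as the paper: the paper also bounds the H\"older quotient of $h_m-h$ by the minimum of the $C^0$-bound $2\|h_m-h\|_{C^0}/|x-y|^\alpha$ and the Lipschitz-bound $2R|x-y|^{1-\alpha}$, then uses $\min\{A,B\}\leq A^{1-\alpha}B^\alpha$ to obtain $[h_m-h]_{C^\alpha}\leq 2\|h_m-h\|_{C^0}^{1-\alpha}R^\alpha$. The only cosmetic difference is that the paper carries out the interpolation explicitly in one line rather than stating it as a separate inequality with an abstract constant $C_\alpha$.
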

\begin{proof}
By assumption, there is $R>0$, such that {$h\leq R$ and $h_m\leq R$, for all $m$. Then, by \eqref{eq-lipshcitz}, it holds $\|h_m\|_{C^{0,1}},\|h\|_{C^{0,1}}\leq R$. Thus, for $x,y\in\Sn$, we have}
\begin{eqnarray*}&&\frac{|(h_m(x)-h(x))-(h_m(y)-h(y))|}{|x-y|^a}\\
&\leq&
{\min\left\{\frac{|h_m(x)-h(x)|+|h_m(y)-h(y)|}{|x-y|^a},\left(\frac{|h_m(x)-h_m(y)|}{|x-y|}+\frac{|h(x)-h(y)|}{|x-y|}\right)|x-y|^{1-a})\right\}}\\
&\leq&\min\left\{\frac{2\|h_m-h\|_{C^0}}{|x-y|^a}, 2R|x-y|^{1-a}\right\}\\
&\leq&2\|h_m-h\|_{C^0}^{1-a}R^a.\end{eqnarray*}
This proves our claim.
%which clearly shows that
%$$\limsup_m\sup_{x,y\in \Sn}\frac{|(h_m(x)-h(x))-(h_m(y)-%h(y))|}{|x-y|^a}=0,$$proving our claim.
\end{proof}

\begin{proposition}\label{prop-grad=0 a.e.}
Let $K$ be a convex body in $\R^n$. If $\nabla h_K=0$ almost everywhere on $\Sn$, then $K$ is a Euclidean ball, centered at the origin.    
\end{proposition}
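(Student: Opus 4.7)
The strategy is to show that $h_K$ is constant on $\Sn$; the conclusion will then follow from $1$-homogeneity of support functions together with the uniqueness of the support function of a convex body.

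Since $K$ is a convex body, $h_K$ is convex and $1$-homogeneous on $\R^n$, hence locally Lipschitz on $\R^n\setminus\{o\}$ and, in particular, Lipschitz on $\Sn$. By Rademacher's theorem, $h_K$ is differentiable almost everywhere on $\Sn$, and at such points the spherical gradient $\nabla h_K(p)$ coincides with the tangential component of $Dh_K(p)$ (as recalled in the preliminaries). Thus the hypothesis says precisely that the spherical gradient of the Lipschitz function $h_K|_{\Sn}$ vanishes almost everywhere.

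The key step is to deduce from this that $h_K$ is constant on $\Sn$. Since $n\geq 2$, the sphere is connected, and it suffices to prove that $h_K$ is locally constant. In a smooth chart $\phi\colon U\subseteq \Sn\to V\subseteq\R^{n-1}$, the function $f:=h_K\circ\phi^{-1}$ is Lipschitz on $V$ with Euclidean gradient zero almost everywhere. By Fubini applied to each coordinate direction $e_i$, for almost every line $\ell\subset V$ parallel to $e_i$ the restriction $f|_\ell$ is absolutely continuous with derivative zero almost everywhere, hence constant along $\ell$. Continuity of $f$ propagates this property to \emph{every} such line, so $f$ is independent of each coordinate on $V$, and therefore constant. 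Covering $\Sn$ by such charts and using connectedness, $h_K$ is constant on $\Sn$.

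Setting $c:=h_K|_{\Sn}$, $1$-homogeneity gives $h_K(x)=c|x|$ for all $x\in\R^n$. This is the support function of the Euclidean ball $cB_2^n$ centered at the origin; by uniqueness of support functions, $K=cB_2^n$. Since $K$ has non-empty interior, $c>0$, so $K$ is indeed a Euclidean ball centered at the origin. The main obstacle is the middle (Fubini) step: a naive attempt to integrate $\nabla h_K$ along a single curve joining two given points of $\Sn$ may fail because the null set on which $\nabla h_K$ does not vanish can in principle intersect that particular curve in a set of positive measure. Invoking Fubini over a full family of parallel lines circumvents this difficulty.
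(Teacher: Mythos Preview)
Your proof is correct and in fact considerably more elementary than the paper's own argument. The paper does not directly show that a Lipschitz function with almost everywhere vanishing spherical gradient is constant. Instead it proceeds geometrically: assuming first that $o\in\mathrm{int}\,K$, it uses that $\nabla h_K(u)=0$ at a point of differentiability forces $Dh_K(u)$ to be parallel to $u$, hence $h_K(u)=\rho_K(u)$; thus $h_K=\rho_K$ everywhere by continuity, and then a combination of H\"older's inequality and Urysohn's inequality forces $h_K$ to be constant. The general case is reduced to this by applying the special case to $K+(-K)$ and using that a ball forces strict convexity of $K$, so that $h_K\in C^1$.

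Your approach bypasses all of this by invoking only the standard real-analysis fact that a locally Lipschitz function on an open connected subset of $\R^{n-1}$ with $\nabla f=0$ a.e.\ is constant (via Fubini and absolute continuity on lines). This is shorter, avoids Urysohn's inequality and the radial-function detour, and works uniformly without splitting into cases according to the position of the origin. The paper's route, on the other hand, highlights a pleasant equality case between H\"older and Urysohn which may be of independent aesthetic interest, but is not needed for the bare statement. Your Fubini-plus-continuity passage from ``almost every line'' to ``every line'' is the right way to handle the null-set issue you flag; just make sure to take the charts with convex (e.g., box) images so that independence of each coordinate immediately yields constancy.
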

{\begin{proof}Define the function 
$$\widetilde{h}:\R^n\setminus\{o\}\ni x\mapsto h_K(x/|x|)\in\R.$$
Let $x\in\Sn$ be such that $Dh_K(x)$ exists. Then, one can compute
$$D\widetilde{h}(x)=Dh_K(x)-\langle Dh_K(x),x\rangle x=\nabla h_K(x).$$
In other words, $D\widetilde{h}(x)=0$, for almost every $x\in\Sn$. Since $D\widetilde{h}(x)$ is $-1$-homogeneous, we conclude that $D\widetilde{h}=0$, almost everywhere in $\R^n\setminus\{o\}$. The proof follows from the well known fact that a locally Lipschitz function $f$ (defined in a connected domain), whose gradient is zero almost everywhere, has to be constant. The latter is based on the fact that if $g$ is a compactly supported smooth function, then by the assumption of $f$ being locally Lipschitz, the dominating convergence theorem gives
$$D(g\ast f)=g\ast Df.$$Here, $g\ast f$ denotes the convolution of $g$ and $f$.
\end{proof}}
The last auxiliary result before the proof of Theorem \ref{uniqueness_Christoffel_Minkowski} is a consequence of Theorem \ref{thm-old-D}.
\begin{lemma}\label{lemma-IM}
Let $(p,j)\neq (0,1)$ and $\{K_m\}$ be a sequence of symmetric convex bodies, such that $h_{K_m}\in C^2$, for all $m$. Assume that $K_m\to K$ in Hausdorff {distance}, for some symmetric convex body $K$ and that $h_{K_m}$ satisfies
$$h_{K_m}^{1-p}s_j(K_m,\cdot)=g_m,$$for some function $g_m$. If $g_m\to 1$ in $C^0$ norm, then $K=B_2^n$.
\end{lemma}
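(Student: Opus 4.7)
The plan is to apply Theorem \ref{thm-old-D} to each $K_m$ with the specific choice $c=\binom{n-1}{j}$ and exploit the smallness of $\|g_m-1\|_{L^\infty}$ to force $\int_{\Sn}|\nabla h_{K_m}|^2\,d\Ha\to 0$. Combined with the uniform convergence $h_{K_m}\to h_K$, this will give $\nabla h_K=0$ almost everywhere on $\Sn$, so Proposition~\ref{prop-grad=0 a.e.} identifies $K$ as a Euclidean ball centered at the origin; the radius is then pinned down to $1$ by passing to the limit in the equation itself.

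First I would invoke Theorem~\ref{lower_bound_sup_fun} on each $K_m$ to get uniform two-sided bounds $C^{-1}\le h_{K_m}\le C$, with $C$ depending only on $n,p$ and $\sup_m\|\log g_m\|_\infty$ (finite since $g_m\to 1$ uniformly); in particular $K$ contains the origin in its interior and the $h_{K_m}$ are uniformly Lipschitz. Specialising Theorem~\ref{thm-old-D} to $K=K_m$ and using the identity $\sigma_1(K_m,\cdot)-(n-1)h_{K_m}=\Delta_{\Sn}h_{K_m}$ (a consequence of ${\cal A}h=\nabla^2 h+h\delta$ and $(n-1)s_1=\tr{\cal A}h$), the inequality reduces to
\begin{equation*}
\binom{n-1}{j}\!\int_{\Sn}\!\bigl((p+1)+(j-2)g_m\bigr)h_{K_m}^p|\nabla h_{K_m}|^2\,d\Ha
\le \binom{n-1}{j}\!\int_{\Sn}(g_m-1)\,h_{K_m}^{p+1}\Delta_{\Sn}h_{K_m}\,d\Ha,
\end{equation*}
after dropping the non-negative term $2h_{K_m}\sum_i\lambda_i^{(m)}\sigma_j^{ii}(K_m)(\nabla_i h_{K_m})^2$ from the left-hand side ($\lambda_i^{(m)}\ge 0$ for a convex body, and each $\sigma_j^{ii}$ is a mixed discriminant of positive semidefinite matrices). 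The coefficient on the left converges uniformly to $\binom{n-1}{j}(p+j-1)$, which is strictly positive \emph{precisely} when $(p,j)\neq(0,1)$.

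For the right-hand side, $\|g_m-1\|_\infty\to 0$, $h_{K_m}^{p+1}$ is uniformly bounded, and the same identity with $s_1,h_{K_m}\ge 0$ yields
$$\int_{\Sn}|\Delta_{\Sn}h_{K_m}|\,d\Ha \le (n-1)\!\int_{\Sn}\!\bigl(s_1(K_m,\cdot)+h_{K_m}\bigr)d\Ha,$$
which is uniformly bounded ($\int s_1(K_m,\cdot)\,d\Ha=S_1(K_m,\Sn)$ is a fixed multiple of $V_1(K_m)$, controlled by the uniform outradius). Hence $\int_{\Sn}|\nabla h_{K_m}|^2 d\Ha\to 0$. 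Extending $h_{K_m}$ to $\Rn$ by positive homogeneity makes it convex, and for uniformly convergent convex functions the gradients converge at every point of differentiability of the limit; this gives $\nabla h_{K_m}(x)\to\nabla h_K(x)$ for a.e.\ $x\in\Sn$. Fatou's lemma then yields $\int_{\Sn}|\nabla h_K|^2 d\Ha=0$, so $\nabla h_K=0$ a.e.\ on $\Sn$, and Proposition~\ref{prop-grad=0 a.e.} forces $K=rB_2^n$ for some $r>0$. Finally, the weak convergence $S_j(K_m,\cdot)\to S_j(K,\cdot)$ (Schneider \cite{Sch14}, Chapter 4) together with the uniform convergence $h_{K_m}^{1-p}s_j(K_m,\cdot)\to 1$ gives $h_K^{1-p}\,dS_j(K,\cdot)=d\Ha$; for $K=rB_2^n$ this becomes $r^{j-p+1}=1$, forcing $r=1$ since $j-p+1>0$.

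I expect the main technical obstacle to be justifying the uniform $L^1$ bound on $\Delta_{\Sn}h_{K_m}$---which is exactly where Theorem~\ref{lower_bound_sup_fun} (rather than just the upper bound Lemma~\ref{upper_bound_sup_fun}) is indispensable---together with the subtlety that Theorem~\ref{thm-old-D} is formally stated for $K_m\in{\cal S}^n$ while we only assume $h_{K_m}\in C^2$. The latter can be handled by approximating $K_m$ with $K_m+\varepsilon B_2^n\in{\cal S}^n$ and letting $\varepsilon\to 0$, using that both sides of the Ivaki--Milman inequality are continuous in the support function in $C^2$.
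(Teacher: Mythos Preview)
Your proposal follows essentially the same route as the paper: apply Theorem~\ref{thm-old-D} with $c=\binom{n-1}{j}$, drop the nonnegative $\sum_i\lambda_i\sigma_j^{ii}(\nabla_i h)^2$ term, show the right-hand side tends to $0$ while the left-hand side tends to $(p+j-1)\int h_K^p|\nabla h_K|^2$, and conclude via Proposition~\ref{prop-grad=0 a.e.}. The only substantive variation is in how you kill the right-hand side: you bound $\int_{\Sn}|\Delta_{\Sn}h_{K_m}|$ in $L^1$ via the pointwise estimate $|\Delta_{\Sn}h|\le (n-1)(s_1+h)$, whereas the paper instead uses that $(g_m-1)h_{K_m}^{p+1}\to 0$ uniformly while $s_1(K_m,\cdot)\,d\Ha=S_1(K_m,\cdot)$ converges weakly; both arguments are valid and of comparable length.

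Two small corrections. First, your proposed approximation $K_m+\varepsilon B_2^n$ puts you in ${\cal S}^n$ but does \emph{not} raise regularity beyond $C^2$, so it does not by itself meet the ``sufficiently smooth'' hypothesis of Theorem~\ref{thm-old-D}; the paper instead takes $\widetilde K_m\in{\cal S}^n$ with $h_{\widetilde K_m}\in C^\infty$ and $\|h_{K_m}-h_{\widetilde K_m}\|_{C^2}<1/m$, which is exactly the $C^2$-continuity remark you make at the end. Second, your claim that the \emph{lower} bound in Theorem~\ref{lower_bound_sup_fun} is indispensable for the $L^1$ control of $\Delta_{\Sn}h_{K_m}$ is not right: your own bound $\int|\Delta_{\Sn}h_{K_m}|\le(n-1)\int(s_1(K_m,\cdot)+h_{K_m})$ uses only the uniform outradius, and indeed the paper's proof invokes Theorem~\ref{lower_bound_sup_fun} solely for the upper bound (to control $|\nabla h_m|$ via \eqref{eq-lipshcitz}).
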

\begin{proof}
For a positive integer $m$, there exists $\widetilde{K}_m\in{\cal S}^n$, such that $h_{\widetilde{K}_m}\in C^\infty$ and $\|h_{K_m}-h_{\widetilde{K}_m}\|_{C^2}<1/m$.
Set $h:=h_K$, $s_1:=s_1(K,\cdot)$, $h_m:=h_{\widetilde{K}_m}$, $s_{1,m}:=s_1(\widetilde{K}_m,\cdot)$ and $w_m:=h_m^{1-p}s_j(\widetilde{K}_m,\cdot)$. 
Then, it is clear that $h_m\to h$ and $w_m\to 1$ in $C^0$ norm.

By Theorem \ref{thm-old-D} and since clearly $\sigma_j^{ii},\lambda_i\geq 0$, for $c=\binom{n-1}{j}$ we find\begin{equation}\label{eq-bla}\int_{\Sn}(p+1+(j-2)w_m)h_m^p|\nabla h_m|^2d\Ha\leq \int_{\Sn}(w_m-1)h_m^{p+1}\left((n-1)s_{1,m}-(n-1)h_m\right)d\Ha.\end{equation}
Recall that $s_{1,m}$ converges weakly to $s_1$. Since $w_m-1\to 0$ and $h_m\to h$ in $C^0$, we conclude that the right-hand side in \eqref{eq-bla} converges to 0. Also, $\nabla h_m(x)\to \nabla h(x)$, for all $x$ such that $\nabla h(x)$ exists (that is, for almost every $x$). Since by Theorem \ref{lower_bound_sup_fun} and  \eqref{eq-lipshcitz}, $|\nabla h_m|$ is uniformly bounded, it follows from the dominated convergence theorem that the left-hand side in \eqref{eq-bla} converges to $(p+j-1)\int_{\Sn}h^p|\nabla h|^2d\Ha$. This shows that $\nabla h=0$, almost everywhere in $\Sn$. Thus, by Proposition \ref{prop-grad=0 a.e.}, $K$ is a Euclidean ball centered at the origin. But then, $h_K^{1-p}s_j(K,\cdot)\equiv 1$, which by an easy calculation gives $K=B_2^n$. 
\end{proof}
\noindent
\begin{proof}[Proof of Theorem \ref{uniqueness_Christoffel_Minkowski}.] Existence follows from Lemma \ref{lemma-regularity-sub} (i). Assume that the uniqueness assertion of the theorem is not true. Then, by Corollary \ref{cor-local-uniqueness}, there exists a sequence $\{K_m\}\subseteq {\cal S}^n$ and a sequence of positive even continuous functions $\{g_m\}$, satisfying
$$h_{K_m}^{1-p}s_j(K_m,\cdot)=g_m$$ and $g_m\to1$ in $C^\alpha$ norm, while \begin{equation}\label{eq-vxvxv}\|h_{K_m}-1\|_{C^2}>\eta_1,\end{equation} for some fixed $\eta_1>0$ and for all $m$. By Theorem \ref{lower_bound_sup_fun} and by the Blaschke selection theorem, we may assume that there is a symmetric convex body $K$, such that $K_m\to K$. However, Lemma \ref{lemma-IM} shows that $K=B_2^n$, hence $h_{K_m}\to 1$ in $C^0$ norm. 

Using Lemma \ref{lemma-C^a-convergence}, we see that $g_mh_{K_m}^{p-1}\to 1$ in $C^\alpha$ norm. Hence, by Lemma \ref{lemma-regularity-sub}, we conclude that $\|h_{K_m}\|_{C^{2,\alpha}}\leq C$, if $m$ is large enough. Therefore, $\{h_{K_m}\}$ has a subsequence that converges in $C^2$ norm to some even support function $h$. Since $h_{K_m}\to 1$ in $C^0$ norm, we conclude that $h=1$. However, this contradicts \eqref{eq-vxvxv} and the proof of Theorem \ref{uniqueness_Christoffel_Minkowski} is complete.\end{proof}
We close this note with a counterexample that shows the necessity of the restriciton $(p,j)\neq(0,1)$ in the lower bound of Theorem \ref{lower_bound_sup_fun}.
%We close this note with an example which may be regarded as an explanation of the restriction $(p,j)\neq (0,1)$ in Theorem \ref{uniqueness_Christoffel_Minkowski}.
\\
\\
{\bf Example.} {Fix $N\in\Sn$ and} set $B:=B_2^n\cap N^\perp$. Then, for $u\in\Sn\setminus\{\pm N\}$, by Proposition \ref{geodesic_proj} and $h_K(u)=\langle u,\Tilde{u}\rangle h_K(\Tilde{u})$,
%\eqref{eq-re}, 
we get
\begin{equation*}
    s_j(B,u)h_B(u)^j=(1-\frac{j}{n-1})s_j^{N^{\perp}}(B,\Tilde{u})h_B(\Tilde{u})^j=1-\frac{j}{n-1}.  
\end{equation*}
We see that $B$ satisfies \eqref{eq-L_pcms} with $p=1-j$ and $g=1-\frac{j}{n-1}$, as long as $S_j(B,\cdot)$ is absolutely continuous with respect to $\Ha$. Let us check that this is indeed true. Denote by $G(n,n-1)$ the Grassmannian manifold of all $n-1$-dimensional linear subspaces of $\Rn$ and by $\nu_{n-1}$ the Haar probability measure on $G(n,n-1)$ and let $\omega\subseteq \Sn$ be a Borel set. Using \cite[equation (4.78)]{Sch14}, we find
\begin{equation*}
    %\label{Schneider_(4.78)}
    \int_{G(n,n-1)}S_j^E(B|E,\omega\cap E)d\nu_{n-1}(E)=\frac{(n-1)\kappa_{n-1}}{n\kappa_n}S_j(B,\omega).
\end{equation*}
The fact that $S_j(B,\cdot)$ is absolutely continuous follows from the fact that for $\nu_{n-1}$-almost every $E\in G(n,n-1)$, $B|E$ is a full-dimensional ellipsoid in $E$, thus $S^E_j(B|E,\cdot)$ is absolutely continuous for almost every $E$. $\square$

\noindent Konstantinos Patsalos, Department of Mathematics, University of Ioannina, Greece\\
k.patsalos@uoi.gr\\

\noindent Christos Saroglou, Department of Mathematics, University of Ioannina, Greece\\
csaroglou@uoi.gr \ \& \ christos.saroglou@gmail.com

%\vspace{1.5 cm}
%\noindent 
%Konstantinos Patsalos \\
%Department of Mathematics\\
%University of Ioannina\\
%Ioannina, Greece, 45110 \\
%E-mail address: k.patsalos@uoi.gr %\ \& \ 
%\\
%\noindent\\
%Christos Saroglou \\
%Department of Mathematics\\
%University of Ioannina\\
%Ioannina, Greece, 45110 \\
%E-mail address: csaroglou@uoi.gr \ \& \ christos.saroglou@gmail.com

\end{document}